\newcommand{\cA}{\mathcal{A}}
\DeclareMathOperator{\Der}{\mathrm{Der}}
\DeclareMathOperator{\LocDer}{\mathrm{LocDer}}
\numberwithin{equation}{section}
\newtheorem{theorem}{Theorem}[section]
\newtheorem{proposition}[theorem]{Proposition}
\newtheorem*{proposition*}{Proposition}
\newtheorem{corollary}[theorem]{Corollary}
\theoremstyle{definition}
\newtheorem{df}[theorem]{Definition}
\newtheorem{example}[theorem]{Example}
\theoremstyle{remark} \newtheorem{remark}[theorem]{Remark}
\begin{document}
	
\title[Local derivations and automorphisms of nilpotent Lie algebras]{Local derivations and automorphisms of nilpotent Lie algebras}

	\author[A.Kh. Khudoyberdiyev]{Abror Khudoyberdiyev$^{1,2}$}
	\address{$^1$V.I.Romanovskiy Institute of Mathematics Uzbekistan Academy of Sciences. 9,  University street, 100174, Tashkent, Uzbekistan}

 \address{$^2$National University of Uzbekistan named after Mirzo Ulugbek. 4, University street, 100174, Tashkent, Uzbekistan}

	\author[D.E.Jumaniyozov]{Doston Jumaniyozov$^1$}
\address{E-mail addresses: khabror@mail.ru, dostondzhuma@gmail.com}

	\keywords{Lie algebra, nilpotent algebra, derivation, local derivation, automorphism, local automorphism}

\begin{abstract}
	
The paper is devoted to the study of local derivations and automorphisms of nilpotent Lie algebras. Namely, we proved that nilpotent Lie algebras with indices of nilpotency $3$ and $4$ admit local derivation (local automorphisms) which is not a derivation (automorphisms). Further, it is presented a sufficient condition under which a nilpotent Lie algebra admits a local derivation which is not a derivation. With the same condition, it is proved the existence of pure local automorphism on a nilpotent Lie algebra. Finally, we present an $n$-dimensional non-associative algebra for which the space of local derivations coincides with the space of derivations.
\end{abstract}

\date{\today}

\maketitle

\section{Introduction}

Local mappings on non-associative algebras have been one of the main topic of many research works in recent years.
The notions that are close to derivations and automorphisms (so-called local derivations and  local automorphisms) were  introduced and investigated independently by R.V. Kadison~\cite{Kadison90} and D.R. Larson and A.R. Sourour~\cite{Larson90}.
The papers \cite{Kadison90,Larson90} gave rise to a series of works devoted to the description of local automorphisms and local derivations of $C^\ast$-algebras and  operator algebras. Namely, for the algebra $\cA$ of all bounded linear operators on a Banach space $X$ it is proved that every invertible  local automorphism of $\cA$ is an automorphism \cite{Larson90}.

The coincidence of the space of local derivations ${\rm LocDer}(-)$ with the space of derivations ${\rm Der}(-)$ on semisimple Lie algebra over an algebraically closed field of characteristic zero is established, while for some finite-dimensional nilpotent Lie algebras these spaces are different (see in \cite{AK16}). Similar result for standard Borel subalgebras of a finite-dimensional simple Lie algebra over an algebraically closed field of characteristic zero was proved in \cite{YC20}. Moreover, it is proved that every local derivation is a derivation on a complex solvable Lie algebra with nilradical of maximal rank (see \cite{KKO21}). Further, it is proved that every local derivation is a derivation on infinite-dimensional Lie algebra $W(2,2)$ (see \cite{WGL24}). In general, the behavior of local derivations is not definite on solvable Lie algebras because of the existence of solvable Lie algebras which admit local derivations which are not ordinary derivation and also algebras for which every local derivation is a derivation \cite{AKh21}. The first example of a simple (ternary) algebra with non-trivial local derivations is constructed in \cite{FKK2021}. After that, the first example of a simple (binary) algebra with non-trivial local derivations was constructed in \cite{AEK21}.

As for local automorphism, there are several results which devoted to the description of properties of local automorphisms on some classes of algebras. Namely, it was proved that every local automorphism on the special linear Lie algebra $\mathfrak{sl}_n$ is either automorphism or anti-automorphism \cite{AK18}. Further Constantini \cite{Cons} extended the above result for an arbitrary complex simple Lie algebra $\mathfrak{g}.$
Namely, his main result can be formulated as follows:
\[{\rm LocAut}(\mathfrak{g})={\rm Aut}(\mathfrak{g})\rtimes\{{\rm Id}_{\mathfrak{g}},-{\rm Id}_{\mathfrak{g}}\},\]
where ${\rm Id}_{\mathfrak{g}}$ is the identical automorphism of complex simple Lie algebra $\mathfrak{g}.$

In fact, the algebra of all square matrices over the field of complex numbers, finite-dimensional simple Leibniz algebras over an algebraically closed field of the characteristic zero, infinite-dimensional simple Witt algebra over an algebraically closed field of zero characteristic do not admit pure local automorphisms (see \cite{AKO19}, \cite{BS93}, \cite{CZZ2020}). On the other hand, some types of finite-dimensional nilpotent Lie and Leibniz algebras, as well as the algebra of lower triangular $n\times n$-matrices admit local automorphisms which are not automorphisms (see \cite{AK18}, \cite{AKO19}, \cite{Elisova13}). Moreover, in \cite{KKO23}, it is proved that any local automorphism is an automorphism on complex solvable Lie algebra of maximal rank.

It should be noted that the space $\LocDer(\cA)$ coincides with the space $\Der(\cA)$ mainly when $\cA$ has no outer derivations (see \cite{AK16, CZZ21}). But, it is known that every nilpotent Lie algebra has an outer derivation. Moreover, the fact that every linear transformation on a Lie algebra of nilindex $2$ is a derivation yields that the spaces $\LocDer(\cA)$ and $\Der(\cA)$ coincide.  So, it is natural to conjecture that all nilpotent Lie algebras with nilindex greater than $3$ admit pure local derivations. It is one of the open problems regarding local mappings on non-associative algebras. In this paper, authors attack this problem on nilpotent Lie algebras with small indices of nilpotency. Namely, it is proved that nilpotent Lie algebras with nilindex $3$ and $4$ admit pure local derivations. Moreover, we presented a sufficient condition under which every nilpotent Lie algebra admits local derivation which is not derivation. In addition, we address the existence of pure local automorphism on nilpotent Lie algebras. Interestingly, the situations of local derivations and automorphisms are similar on nilpotent Lie algebras, that is under the same conditions a nilpotent Lie algebra admits pure local automorphism and derivation.

\section{General definitions}

\begin{df}
A Lie algebra over the field $\mathbb{F}$ is a vector space $\mathfrak{g}$ endowed with bilinear mapping $[-,-]:\mathfrak{g}\times\mathfrak{g}\to\mathfrak{g},$ satisfying the following identities:
\[[x,x]=0,\]
\[[[x,y],z]+[[y,z],x]+[[z,x],y]=0.\]
\end{df}

The center of a Lie algebra $\mathfrak{g}$ is defined as
$$Z(\mathfrak{g})=\{x\in\mathfrak{g} \ | \ [x,y]=0,  \mbox{ for all } y\in\mathfrak{g}\}.$$

For a Lie algebra $\mathfrak{g}$, consider the following sequence:
\[\mathfrak{g}^{1}=\mathfrak{g}, \quad \mathfrak{g}^{k+1}=[\mathfrak{g},\mathfrak{g}^{k}], \quad k\geq1.\]

If $\mathfrak{g}^{p}=0$ for some $p\in\mathbb{N},$ then $\mathfrak{g}$ is said to be {\it nilpotent}. The minimal number with this property is called an {\it index of nilpotency} (shortly {\it nilindex}) of $\mathfrak{n}.$ If the index of nilpotency of $\mathfrak{g}$ is $3,$ then $\mathfrak{n}$ is called $2$-step nilpotent Lie algebra.

Note that the center of a nilpotent Lie algebra $\mathfrak{g}$ is always nonempty as $[x,y]=0$ for all $x\in\mathfrak{g}^{p-1}, \ y\in\mathfrak{g},$ where $p$ is the index of nilpotency of $\mathfrak{g}.$

Let $\mathfrak{g}$ be a nilpotent Lie algebra and $\left\{e_{1}, \ldots, e_{k}\right\}$ be generators of $\mathfrak{g}.$ Then, due to a result of \cite{MZh96}, $\mathfrak{g}$ admits a basis $\mathcal{B}$ of the following form:
$$
\left\{e_{1}, \ldots, e_{k}, \left[e_{i_1}, [e_{i_2}, \cdots, [e_{i_{m-1}}, e_{i_m}], \cdots]\right]: 1\le i_1, \ldots, i_m \le k, 1\le m \le k\right\}.
$$

A linear transformation $d$ of a Lie algebra $\mathfrak{g}$ is {\it a derivation} if it satisfies Leibniz rule, that is, equality
$$d([x,y])=[d(x),y]+[x,d(y)]$$
holds for any $x, y\in \mathfrak{g}.$ The set of all derivations of $\mathfrak{g}$ is denoted by ${\rm Der}(\mathfrak{g}).$ Note that ${\rm Der}(\mathfrak{g})$ forms a Lie algebra under commutator. It is known that any linear transformation of $\mathfrak{g}$ sending every element to the element of center of $\mathfrak{g}$ is a derivation. Such derivations are called central derivations. Moreover, a linear transformation $d_0$ sending every element $x$ to zero element is called {\it trivial} derivation. Throughout this paper, when we require or claim the existence of a derivation, we always mean a non-trivial derivation.

Let $x\in\mathfrak{g}.$ A linear mapping defined as
\[{\rm ad}x(y)=[x,y]\]
is a derivation of $\mathfrak{g}.$ Such derivations are called inner derivations and the set of all inner derivations is denoted by ${\rm Inn}(\mathfrak{g}).$

{\it An automorphism} $\varphi$ of a Lie algebra $\mathfrak{g}$ is defined as a linear bijection $\varphi: \mathfrak{g}\to \mathfrak{g}$ such that
$$\varphi([x,y])=[\varphi(x),\varphi(y)]$$
for any $x, y\in \mathfrak{g}.$ The set of all automorphisms of $\mathfrak{g}$ forms a group, which is denoted by ${\rm Aut}(\mathfrak{g}).$ Note that, the identity map ${\rm Id}$ on $\mathfrak{g}$ is an automorphism. It is called a {\it trivial} automorphism. An automorphism $\varphi$ is called {\it unipotent}, if $\varphi-{\rm Id}$ is nilpotent.

Let $D$ be a derivation of a Lie algebra $\mathfrak{g}$ over $\mathbb{F}.$ If $char\mathbb{F}=0,$ then the exponential map defined as
\[{\rm exp}(D)=\sum_{k=0}^{\infty}\frac{D^k}{k!}\]
is an automorphism of $\mathfrak{g}.$ If $D$ is an inner derivation, then ${\rm exp}(D)$ is called inner automorphism.

A linear mapping  $\Delta$ is said to be {\it a local  derivation}, if for every $x\in \mathfrak{g}$ there exists a derivation $d_x$ on $\mathfrak{g}$ (depending on $x$) such that $\Delta(x)=d_x(x)$. Similarly, a linear mapping $\nabla$ is called {\it a local automorphism}, if for every $x\in \mathfrak{g}$ there exists automorphism $\varphi_x$ such that $\nabla(x)=\varphi_x(x)$. The set of local derivations (local automorphisms) of $\mathfrak{g}$ is denoted by ${\rm LocDer}(\mathfrak{g})$ (respectively ${\rm LocAut}(\mathfrak{g})$) Note that the set of all local automorphisms of an algebra form a multiplicative group, which contains the group of automorphisms as a subgroup (see \cite[Lemma 4]{E11}).

Let $f$ be a linear transformation on a Lie algebra $\mathfrak{g}$ and $A$ be a subspace of $\mathfrak{g}.$
$\mathcal{B}=\{e_1,e_2,\dots,e_n\}$ is a basis of $\mathfrak{g}$ such that $\mathcal{B_k}=\{e_1,e_2,\dots,e_k\}$ basis of $A$.
By the expression $f_{|A},$ we mean the linear mapping which is defined as follows:
\begin{align*}
f_{|A}(e_i)= \left\{\begin{array}{cccll}
                f(e_i),& \mbox{ if }& e_i\in A,\\
                0,& \mbox{ if }& e_i\in\mathfrak{g}\setminus A.
                \end{array}\right.
\end{align*}

\section{Local derivations of nilpotent Lie algebras}

Let $\mathfrak{n}$ be a finite-dimensional nilpotent Lie algebra over a field $\mathbb{F}$ with index of nilpotency $p,$ that is, $\mathfrak{n}^p=0.$ Let $\mathcal{B}=\{e_1,e_2,\dots,e_n\}$ be a basis of $\mathfrak{n}.$ At first, let us show that, if $\mathfrak{n}$ is $2$-step nilpotent Lie algebra, that is $p=3$, then it has a diagonal derivation. Indeed, consider a linear mapping ${\rm D}$ which is defined as
\begin{align}{\label{der1}}
\begin{array}{cccccccll}
{\rm D}_{\lambda}(e_i)=\lambda e_i,& & \mbox{if } &  e_i\in\mathfrak{n}\setminus\mathfrak{n}^2,\\
{\rm D}_{\lambda}(e_i)=2\lambda e_i,& & \mbox{if } &   e_i\in\mathfrak{n}^2.
\end{array}
\end{align}

We show that ${\rm D}_{\lambda}$ is a derivation. As expected, let $x,y\in\mathfrak{n}$ and $x=\sum\limits_{i}\alpha_ie_i, \ y=\sum\limits_{j}\beta_je_j.$ Then, we have
\begin{align*}
{\rm D}([x,y])&={\rm D}_{\lambda}\Big(\sum_{i,j}\alpha_i\beta_j[e_i,e_j]\Big)=\sum_{i,j}\alpha_i\beta_j{\rm D}_{\lambda}\left([e_i,e_j]\right)\\
                &=2\lambda\sum_{i,j}\alpha_i\beta_j[e_i,e_j]=\sum_{i,j}\alpha_i\beta_j[\lambda e_i,e_j]+\sum_{i,j}\alpha_i\beta_j[e_i,\lambda e_j]\\
                &=\sum_{i,j}\alpha_i\beta_j[{\rm D}_{\lambda}(e_i),e_j]+\sum_{i,j}\alpha_i\beta_j[e_i,{\rm D}_{\lambda}(e_j)]=[{\rm D}_{\lambda}(x),y]+[x,{\rm D}_{\lambda}(y)].
\end{align*}
This implies that ${\rm D}_{\lambda}$ is a derivation.

\begin{theorem}{\label{mainthm1}}
Let $\mathfrak{n}$ be a $2$-step nilpotent Lie algebra over a field $\mathbb{F}$ $(char\mathbb{F}\neq2).$ Then, $\mathfrak{n}$ admits a local derivation which is not derivation.

\end{theorem}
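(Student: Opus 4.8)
The plan is to build a single concrete linear map and exhibit, element by element, the derivation witnessing that it is local, while arranging that it violates the Leibniz rule globally. The whole construction rests on two families of derivations: the diagonal derivations ${\rm D}_\lambda$ from \eqref{der1}, already shown to be derivations, and the \emph{central} derivations, by which I mean any linear map $\phi$ with $\phi(\mathfrak{n})\subseteq\mathfrak{n}^2$ and $\phi(\mathfrak{n}^2)=0$. That every such $\phi$ is a derivation is immediate from $\mathfrak{n}^3=0$: since $\mathfrak{n}^2\subseteq Z(\mathfrak{n})$, both $[\phi x,y]$ and $[x,\phi y]$ vanish for all $x,y$, and $\phi([x,y])=0$ because $[x,y]\in\mathfrak{n}^2$.

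First I would fix a basis adapted to the decomposition $\mathfrak{n}=W\oplus\mathfrak{n}^2$, where $W$ is a vector-space complement of $\mathfrak{n}^2$ with basis $e_1,\dots,e_m$ and $f_1,\dots,f_r$ is a basis of $\mathfrak{n}^2$ (which is nonzero, the nilindex being exactly $3$). Define $\Delta$ by $\Delta(e_i)=e_i$ and $\Delta(f_j)=0$. This $\Delta$ is not a derivation: choosing $e_i,e_j$ with $[e_i,e_j]\neq0$, which exist since $\mathfrak{n}^2\neq0$, we get $\Delta([e_i,e_j])=0$ because $[e_i,e_j]\in\mathfrak{n}^2$, whereas $[\Delta e_i,e_j]+[e_i,\Delta e_j]=2[e_i,e_j]\neq0$ as $\mathrm{char}\,\mathbb{F}\neq2$.

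It remains to verify that $\Delta$ is a local derivation, which is where the main work lies. Writing an arbitrary element as $x=w+f$ with $w\in W$ and $f\in\mathfrak{n}^2$, we have $\Delta(x)=w$. If $w=0$ then $x\in\mathfrak{n}^2$ and any central derivation $\phi$ gives $\phi(x)=0=\Delta(x)$. If $w\neq0$, then $w\notin\mathfrak{n}^2$, so there is a central derivation $\phi_x$ with $\phi_x(w)=-2f$ (send $w$ to $-2f\in\mathfrak{n}^2$, annihilate a complement of $\langle w\rangle$ in $W$, and annihilate $\mathfrak{n}^2$). Then $d_x:={\rm D}_1+\phi_x$ is a derivation with
\[
d_x(x)={\rm D}_1(w)+{\rm D}_1(f)+\phi_x(w)=w+2f-2f=w=\Delta(x).
\]
The crux is exactly this cancellation: ${\rm D}_1$ doubles the $\mathfrak{n}^2$-component (the factor $2$ in \eqref{der1}), and a central derivation---legitimate precisely because $\mathfrak{n}^2$ is central---can be tuned per element to cancel that surplus without touching the $W$-component. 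The only subtle points are the existence of $\phi_x$ (needing $w\notin\mathfrak{n}^2$, automatic once $w\neq0$) and that it is a genuine derivation, both settled in the first paragraph; collecting the witnesses $d_x$ over all $x$ shows $\Delta\in{\rm LocDer}(\mathfrak{n})\setminus{\rm Der}(\mathfrak{n})$.
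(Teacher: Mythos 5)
Your proof is correct and follows essentially the same route as the paper: your $\Delta$ (the projection onto a complement $W$ of $\mathfrak{n}^2$) is exactly ${\rm D}_1$ minus the paper's candidate map (which is $0$ on the complement and $2\,{\rm Id}$ on $\mathfrak{n}^2$), and since ${\rm Der}(\mathfrak{n})$ and ${\rm LocDer}(\mathfrak{n})$ are both linear spaces the two constructions are interchangeable. The ingredients also coincide: the same case split on whether the component outside $\mathfrak{n}^2$ vanishes, witnesses assembled from ${\rm D}_1$ and central derivations (maps into $\mathfrak{n}^2$ killing $\mathfrak{n}^2$), and the same $\mathrm{char}\,\mathbb{F}\neq 2$ computation violating the Leibniz rule.
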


\begin{proof}
Let ${\rm D}$ be a derivation of $\mathfrak{n}$ which is defined as \eqref{der1} and $\mathcal{B}=\{e_1,e_2,\dots,e_m,\dots, e_n\}$ be a basis of $\mathfrak{n},$ where the number $m$ means the number of generators of $\mathfrak{n}.$ Consider a linear mapping $\Delta$ defined as
\begin{align*}
\Delta(e_i)= \left\{\begin{array}{cccll}
                0,& \mbox{ if }& e_i\in\mathfrak{n}\setminus\mathfrak{n}^2,\\
                2e_i,& \mbox{ if }& e_i\in\mathfrak{n}^2.
                \end{array}\right.
\end{align*}

We shall show that $\Delta$ is a local derivation which is not derivation. First, let us show that it is not derivation. Let $[e_r,e_t]=e_q$ for some $e_r,e_t\in\mathfrak{n}\setminus\mathfrak{n}^2, \ e_q\in\mathfrak{n}^2.$ Then, we have
\[[\Delta(e_r),e_t]+[e_r,\Delta(e_t)]=0\neq2e_q=2[e_r,e_t]=\Delta([e_r,e_t]).\]
Thus, $\Delta$ is not derivation.

Now, we show that it is a local derivation.

Let $x=\sum\limits_{i=1}^{n}\lambda_ie_i,$ then $\Delta(x)=2\sum\limits_{i=m+1}^{n}\lambda_ie_i.$
Consider the following possible cases:
\begin{itemize}

\item[1.] Let $\lambda_j\neq0$ for some $j$ $(1\leq j\leq m).$ Then, consider a derivation defined as
\begin{align*}
D_x(e_j)&=\frac{2}{\lambda_j}\sum\limits_{i=m+1}^{n}\lambda_ie_i,\\
D_x(e_i)&=0, \quad i=1,\dots,n, \quad i\neq j.
\end{align*}

Hence, we derive
$$D_x(x)=\sum\limits_{i=1}^{n}\lambda_iD_x(e_i)=\lambda_jD_x(e_j)=\lambda_j\frac{2}{\lambda_j}\sum\limits_{i=m+1}^{n}\lambda_ie_i=2\sum\limits_{i=m+1}^{n}\lambda_ie_i=\Delta(x).$$

\item[2.] Let $\alpha_j=0$ for all $j$ $(1\leq j\leq m).$ Then, we have $\Delta(x)=2x.$ Define a derivation $D_{x}$ as $D_{x}={\rm D}_{\bf 1},$ i.e.,
$$\begin{array}{cccccccll}
{\rm D}_x(e_i)=e_i,& & \mbox{if } &  e_i\in\mathfrak{n}\setminus\mathfrak{n}^2,\\
{\rm D}_x(e_i)=2 e_i,& & \mbox{if } &   e_i\in\mathfrak{n}^2.
\end{array}
$$

 Thus, we have $D_x(x)={\rm D}_{1}(x)=2x=\Delta(x).$
\end{itemize}
The proof is complete.
\end{proof}

To illustrate Theorem \ref{mainthm1}, consider the following example of $2$-step nilpotent Lie algebra:

\begin{example}
Let $\mathfrak{h}_n$ be complex $(2n+1)$-dimensional Heisenberg algebra with a basis $\mathcal{B}=\{e_{-n},\dots,e_{-1},e_0,e_1,\dots,e_n\}.$ The multiplication table is given as follows:
\begin{align}{\label{heisenberg}}
[e_{-i},e_{i}]=-[e_{i},e_{-i}]=e_{0},
\end{align}
where $i=1,\dots,n.$ The omitted products are supposed to be zero.

A linear mapping $\Delta$ defined as
\begin{align*}
\Delta(e_{0})=e_{0}, \qquad \Delta(e_i)=0, \quad i\neq0,
\end{align*}
is a local derivation by Theorem \ref{mainthm1}. Since
\[[\Delta(e_{-1}),e_{1}]+[e_{-1},\Delta(e_{1})]=0\neq e_{0}=\Delta(e_0)=\Delta([e_{-1},e_{1}]),\]
we conclude that $\Delta$ is not derivation.
\end{example}

\begin{remark}
It should be noted that, in Theorem \ref{mainthm1} the condition $char\mathbb{F}\neq2$ is essential. General behaviour of local derivations on nilpotent Lie algebras over the field of characteristic $2$ is different. There is a nilpotent Lie algebra over the field of characteristic $2$ for which the spaces of local derivations and derivations coincide.
\end{remark}

The following example shows that Theorem \ref{mainthm1} is not true over the field of characteristic $2:$

\begin{example}
Let $\mathfrak{s}$ be a $12$-dimensional nilpotent Lie algebra over the field $\mathbb{Z}_2$ with a basis $\mathcal{B}=\{e_1,e_2,\dots,e_{12}\}$ and the following table of multiplication:
\begin{align*}
[e_2,e_1]&=e_8, & [e_1,e_6]&=e_8, & [e_1,e_3]&=e_9, & [e_1,e_5]&=e_{9}, & [e_5,e_6]&=e_{9},\\
[e_1,e_4]&=e_{10}, & [e_2,e_3]&=e_{10}, & [e_1,e_7]&=e_{10}, &  [e_5,e_7]&=e_{10}, & [e_4,e_3]&=e_{11},\\
[e_7,e_6]&=e_{11}, & [e_2,e_7]&=e_{12}, & [e_3,e_6]&=e_{12},
\end{align*}
where the omitted products are supposed to be zero.

Direct computations show that every derivation ${\rm D}$ of $\mathfrak{s}$ acts on basis elements as follows:
\begin{align}{\label{der=lder2}}
{\rm D}(e_i)= \left\{\begin{array}{cccll}
                \alpha e_i+{\rm T}(e_i),& \mbox{ if }& 1\leq i\leq 7,\\
                0,& \mbox{ if }& 8\leq i\leq 12,
                \end{array}\right.
\end{align}
where $\alpha\in\mathbb{Z}_2$ and ${\rm T}$ is a central derivation.

Let $\Delta$ be a local derivation of $\mathfrak{s}.$ Then, $\Delta$ is defined as follows:
\begin{align*}
\Delta(e_i)= \left\{\begin{array}{cccll}
                \beta_i e_i+{\rm T_{e_i}}(e_i),& \mbox{ if }& 1\leq i\leq 7,\\
                0,& \mbox{ if }& 8\leq i\leq 12,
                \end{array}\right.
\end{align*}
where $\beta_i\in\mathbb{Z}_2$ and ${\rm T}_{e_{i}}$ are central derivations.

Now, we show that every local derivation $\Delta$ on $\mathfrak{s}$ is a derivation. Indeed, consider an element $x=\sum\limits_{i=1}^{7}e_i.$ Applying $\Delta(x)={\rm D}_x(x)$ we have
\begin{align*}
\Delta(x)=\sum_{i=1}^{7}\beta_ie_i+y=\alpha\sum_{i=1}^{7}e_i+z={\rm D}_{x}(x),
\end{align*}
where $y,z\in Z(\mathfrak{s}).$ By comparing the corresponding coefficients at basis elements we obtain $\beta_1=\beta_2=\dots=\beta_7.$ Hence, $\Delta$ is of the form \eqref{der=lder2}, that is, $\Delta$ is a derivation.

\end{example}

Now, we consider more general case. We define a necessary condition under which a nilpotent Lie algebra admits a pure local derivation.

\begin{theorem}{\label{mainthm2}}
Let $\mathfrak{n}$ be a nilpotent Lie algebra over a field $\mathbb{F}$ with nilindex $p$ $(p\geq4).$ If there exists a derivation ${\rm D}\in {\rm Der}({\mathfrak{n}}),$ such that ${\rm D}(\mathfrak{n}^2)\subseteq Z(\mathfrak{n}),$ then $\mathfrak{n}$ admits a local derivation which is not derivation.

\end{theorem}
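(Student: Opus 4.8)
The plan is to recycle the restriction operation $f_{|A}$ from Section~2 with $A=\mathfrak{n}^2$, setting $\Delta:={\rm D}_{|\mathfrak{n}^2}$. Concretely, fixing a basis $\mathcal{B}=\{e_1,\dots,e_m,\dots,e_n\}$ in which $e_1,\dots,e_m$ are generators and $\{e_{m+1},\dots,e_n\}$ spans $\mathfrak{n}^2$, I would define $\Delta(e_i)={\rm D}(e_i)$ for $e_i\in\mathfrak{n}^2$ and $\Delta(e_i)=0$ for the generators $e_i\in\mathfrak{n}\setminus\mathfrak{n}^2$. The hypothesis ${\rm D}(\mathfrak{n}^2)\subseteq Z(\mathfrak{n})$ then forces $\Delta(\mathfrak{n})\subseteq Z(\mathfrak{n})$, a feature I will exploit twice. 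It remains to check that $\Delta$ is a local derivation and that it is not a derivation; as in Theorem~\ref{mainthm1}, the verifications split according to whether a given $x$ has a nonzero generator component.

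For locality, I would write $x=\sum_{i=1}^n\lambda_ie_i=u+w$ with $u=\sum_{i\le m}\lambda_ie_i$ and $w=\sum_{i>m}\lambda_ie_i\in\mathfrak{n}^2$, so that $\Delta(x)={\rm D}(w)\in Z(\mathfrak{n})$. When $u=0$ we have $x=w\in\mathfrak{n}^2$, and ${\rm D}$ itself satisfies ${\rm D}(x)=\Delta(x)$. When $u\neq0$, choosing $j\le m$ with $\lambda_j\neq0$, I would take $d_x$ to be the map sending $e_j\mapsto\tfrac1{\lambda_j}{\rm D}(w)$ and every other basis vector to $0$; since ${\rm D}(w)$ is central and $d_x$ annihilates $\mathfrak{n}^2$, this $d_x$ is a genuine central derivation, and $d_x(x)={\rm D}(w)=\Delta(x)$. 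This gives $\Delta\in\LocDer(\mathfrak{n})$. The one point to state carefully is the standard fact that a linear map into $Z(\mathfrak{n})$ which kills $\mathfrak{n}^2$ is a derivation; note that $d_x$ meets both requirements, whereas $\Delta$ will fail the second.

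Finally, to see that $\Delta$ is not a derivation I would argue by contradiction from $\Delta(\mathfrak{n})\subseteq Z(\mathfrak{n})$: if $\Delta$ were a derivation, then for all $x,y$ both summands of $[\Delta(x),y]+[x,\Delta(y)]$ vanish by centrality, forcing $\Delta([\mathfrak{n},\mathfrak{n}])=\Delta(\mathfrak{n}^2)=0$; but $\Delta(\mathfrak{n}^2)={\rm D}(\mathfrak{n}^2)$. Thus everything reduces to knowing ${\rm D}(\mathfrak{n}^2)\neq\{0\}$, and this is the step I expect to be the real obstacle. The existence hypothesis alone only guarantees ${\rm D}\neq0$, which does not by itself rule out ${\rm D}|_{\mathfrak{n}^2}=0$ (in which case $\Delta$ collapses to the trivial map); the genuine content needed is a derivation that maps $\mathfrak{n}^2$ into the center \emph{without} annihilating it. I would therefore make the nonvanishing ${\rm D}(\mathfrak{n}^2)\neq\{0\}$ explicit, extracting it from the hypothesis, with the case $p=3$ already settled in Theorem~\ref{mainthm1} so that $p\ge4$ only delimits the scope. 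Once it is in hand, one may even point to concrete generators $e_r,e_t$ with ${\rm D}([e_r,e_t])\neq0$, for which $[\Delta(e_r),e_t]+[e_r,\Delta(e_t)]=0\neq\Delta([e_r,e_t])$.
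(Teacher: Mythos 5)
Your proposal is correct and takes essentially the same route as the paper's own proof: the same map $\Delta={\rm D}_{|\mathfrak{n}^2}$, the same two-case verification of locality (using ${\rm D}$ itself when the generator part of $x$ vanishes, and a central derivation supported on a single generator otherwise), and the same conclusion that $\Delta$ violates the Leibniz rule precisely because ${\rm D}$ does not annihilate $\mathfrak{n}^2$. The only real difference is one of rigor, to your credit: the paper dismisses the possibility ${\rm D}_{|\mathfrak{n}^2}=0$ by asserting that such a ${\rm D}$ would be trivial (not literally true, since a nonzero central derivation can kill $\mathfrak{n}^2$), whereas you correctly identify that the hypothesis must be read as supplying a derivation with ${\rm D}(\mathfrak{n}^2)\neq\{0\}$ and make that nonvanishing explicit.
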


\begin{proof}
Let $\mathcal{B}=\{e_1,e_2,\dots,e_m,\dots, e_n\}$ be a basis of $\mathfrak{n},$ where the number $m$ means the number of generator elements of $\mathfrak{n}.$ Suppose that, $[e_r,e_t]=e_q$ for some $e_r,e_t\in\mathfrak{n}\setminus\mathfrak{n}^2, \ e_q\in\mathfrak{n}^2\setminus\mathfrak{n}^3.$

If ${\rm D}(e_i)=0$ for all $e_i\in\mathfrak{n}^2\setminus\mathfrak{n}^3,$ then ${\rm D}$ would be a trivial derivation which contradicts the existence of derivation with ${\rm D}(\mathfrak{n}^2)\subseteq Z(\mathfrak{n}).$ Therefore, there exists $e_{i_0}\in\mathfrak{n}^2$ such that ${\rm D}(e_{i_0})\neq0.$

Without loss of generality we may assume that ${\rm D}(e_q)\neq0.$ Then, define a linear mapping $\Delta:\mathfrak{n}\to\mathfrak{n}$ as follows:
$$\Delta={\rm D}_{|\mathfrak{n}^2}.$$

We shall show that it is a local derivation which is not derivation. Since
$$[\Delta(e_r),e_t]+[e_r,\Delta(e_t)]=0\neq {\rm D}(e_q)=\Delta([e_r,e_t]),$$
we conclude that $\Delta$ is not derivation.

However, $\Delta$ is a local derivation. Indeed, let $x=\sum\limits_{i=1}^{n}\lambda_ie_i.$ Then, $\Delta(x)=y\in Z(\mathfrak{n}).$

Consider the following possible cases:
\begin{itemize}

\item Let $\lambda_j\neq0$ for some $j$ $(1\leq j\leq m).$ Then, define a derivation as follows:
\begin{align*}
D_x(e_j)&=\frac{1}{\lambda_j}y,\\
D_x(e_i)&=0, \quad i=1,\dots,n, \quad i\neq j.
\end{align*}

Hence, we obtain that
$$D_x(x)=\lambda_jD_x(e_j)=\lambda_j\frac{1}{\lambda_j}y=y=\Delta(x).$$

\item Let $\lambda_j=0$ for all $j$ $(1\leq j\leq m).$ Then, consider a derivation $D_x$ defined as
$$D_{x}={\rm D}.$$
Hence, we derive
$$D_x(x)={\rm D}(x)={\rm D}\left(\sum\limits_{i=m+1}^{n}\lambda_ie_i\right)=y=\Delta(x).$$

\end{itemize}
The proof is complete.
\end{proof}

Now, we state some implications of Theorem \ref{mainthm2}.

\begin{proposition}
Let $\mathfrak{n}$ be a nilpotent Lie algebra with nilindex $4.$ Then, $\mathfrak{n}$ admits a local derivation which is not derivation.
\end{proposition}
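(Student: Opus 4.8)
The plan is to apply Theorem~\ref{mainthm2}, so the entire task reduces to exhibiting, for an arbitrary nilpotent Lie algebra $\mathfrak{n}$ of nilindex $4$, a derivation ${\rm D}$ satisfying ${\rm D}(\mathfrak{n}^2)\subseteq Z(\mathfrak{n})$. The natural candidate is the diagonal (grading) derivation that acts by the level of the lower central series: I would define ${\rm D}$ on the generators $e_i\in\mathfrak{n}\setminus\mathfrak{n}^2$ by ${\rm D}(e_i)=e_i$, and extend it so that it scales each homogeneous layer $\mathfrak{n}^k\setminus\mathfrak{n}^{k+1}$ by the integer $k$. Concretely, using the basis $\mathcal{B}$ of Maltsev--type coming from \cite{MZh96}, an element which is an iterated bracket of length $k$ should be sent to $k$ times itself.

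First I would verify that this layered scaling really is a derivation. The key point is that the bracket is compatible with the filtration: if $x\in\mathfrak{n}^a$ and $y\in\mathfrak{n}^b$ then $[x,y]\in\mathfrak{n}^{a+b}$, so on a bracket of a length-$a$ element with a length-$b$ element the two sides of the Leibniz rule agree because $a+b=a\cdot 1+1\cdot b$ holds after matching the weights. It is cleanest to check this on basis brackets $[e_i,e_j]$ and extend bilinearly, exactly as in the computation following \eqref{der1}; the only new content beyond the $2$-step case is that there are now three nontrivial layers rather than two. I would record that this ${\rm D}$ is nontrivial, since $\mathfrak{n}\neq\mathfrak{n}^2$ forces ${\rm D}$ to act nontrivially on the generators.

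Next I would check the hypothesis ${\rm D}(\mathfrak{n}^2)\subseteq Z(\mathfrak{n})$. Because $p=4$ we have $\mathfrak{n}^4=0$, hence $\mathfrak{n}^3\subseteq Z(\mathfrak{n})$ (brackets of $\mathfrak{n}^3$ with anything land in $\mathfrak{n}^4=0$). The derivation ${\rm D}$ maps $\mathfrak{n}^2$ into $\mathfrak{n}^2$ preserving layers, so ${\rm D}(\mathfrak{n}^2)\subseteq\mathfrak{n}^2$, and this is not yet contained in the center. The decisive observation is that ${\rm D}(\mathfrak{n}^2)\subseteq Z(\mathfrak{n})$ is \emph{not} automatic for the grading derivation, so I expect this to be the main obstacle and the place where the nilindex-$4$ restriction must be used carefully. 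I would instead choose ${\rm D}$ to be the projection-type derivation that kills the generators and the $\mathfrak{n}^3$-layer while sending the $\mathfrak{n}^2\setminus\mathfrak{n}^3$ layer into $\mathfrak{n}^3$, or more robustly argue as follows: take ${\rm D}$ to be any derivation that is nonzero on $\mathfrak{n}^2\setminus\mathfrak{n}^3$; then for $u\in\mathfrak{n}^2$ the element ${\rm D}(u)$ lies in $\mathfrak{n}^2$ only if the weights match, but the cleaner route is to use the grading derivation composed so that its restriction to $\mathfrak{n}^2$ lands in $\mathfrak{n}^3\subseteq Z(\mathfrak{n})$.

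The resolution I would present is this: define ${\rm D}$ by ${\rm D}(e_i)=0$ for $e_i\in\mathfrak{n}\setminus\mathfrak{n}^2$, ${\rm D}(e_i)=e_i$ for $e_i$ in the layer $\mathfrak{n}^3$, and ${\rm D}(e_i)=0$ for $e_i\in\mathfrak{n}^2\setminus\mathfrak{n}^3$; this is the grading-type derivation assigning weight $0,0,1$ to the three layers (equivalently, weight $k-2$ truncated to $\{0,1\}$, which is additive across the only surviving products). One checks it is a derivation exactly as above, it is nontrivial because $\mathfrak{n}^3\neq 0$ for nilindex $4$, and it satisfies ${\rm D}(\mathfrak{n}^2)\subseteq\mathfrak{n}^3\subseteq Z(\mathfrak{n})$ since ${\rm D}$ sends $\mathfrak{n}^2\setminus\mathfrak{n}^3$ to $0$ and $\mathfrak{n}^3$ into itself. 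With this derivation in hand, Theorem~\ref{mainthm2} applies directly and yields a local derivation of $\mathfrak{n}$ that is not a derivation, completing the proof.
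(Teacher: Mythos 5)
Your reduction to Theorem~\ref{mainthm2} is exactly the paper's strategy, and your observation that $\mathfrak{n}^3\subseteq Z(\mathfrak{n})$ when $p=4$ is correct; the gap is that the derivation you end up supplying is not a derivation. The lower central series is a \emph{filtration}, not a grading, so a linear map that scales the layers $\mathfrak{n}\setminus\mathfrak{n}^2$, $\mathfrak{n}^2\setminus\mathfrak{n}^3$, $\mathfrak{n}^3$ of an adapted basis by prescribed weights need not satisfy the Leibniz rule: a bracket of two generators can have a nonzero component in $\mathfrak{n}^3$. Concretely, take the $4$-dimensional algebra with $[e_1,e_2]=e_3+e_4$, $[e_1,e_3]=e_4$ (all other brackets zero). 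Here $\mathfrak{n}^2=\mathrm{span}\{e_3,e_4\}$, $\mathfrak{n}^3=\mathrm{span}\{e_4\}$, $\mathfrak{n}^4=0$, and $\{e_1,e_2,e_3,e_4\}$ is adapted to the filtration. Your final map ${\rm D}$ (kill $e_1,e_2,e_3$, fix $e_4$) gives
\begin{equation*}
{\rm D}([e_1,e_2])={\rm D}(e_3+e_4)=e_4\neq 0=[{\rm D}(e_1),e_2]+[e_1,{\rm D}(e_2)],
\end{equation*}
so it is not a derivation; the same mixing kills your first candidate (weights $1,2,3$), and your parenthetical claim that the weights $0,0,1$ are ``additive across the only surviving products'' is false, since a generator bracketed with the $\mathfrak{n}^2\setminus\mathfrak{n}^3$ layer lands in $\mathfrak{n}^3$ while $0+0\neq 1$. (The analogous computation after \eqref{der1} works in the $2$-step case only because there every nonzero bracket lands in the top layer $\mathfrak{n}^2$, which annihilates everything.)

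Moreover, no choice of basis or weights can repair this: there exist nilpotent Lie algebras of nilindex $4$ (the characteristically nilpotent algebras, e.g.\ Dixmier--Lister's $8$-dimensional example) on which \emph{every} derivation is nilpotent, so they admit no nonzero diagonalizable ``grading-type'' derivation at all. The missing idea, which is how the paper proves this proposition, is to use an \emph{inner} derivation instead: since the nilindex is $4$, we have $\mathfrak{n}^3=[\mathfrak{n},\mathfrak{n}^2]\neq 0$, so there exist $x\in\mathfrak{n}$ and $y\in\mathfrak{n}^2$ with $[x,y]\neq 0$; then ${\rm D}=\mathrm{ad}\,x$ is a nonzero derivation, it is nonzero on $\mathfrak{n}^2$, and ${\rm D}(\mathfrak{n}^2)\subseteq[\mathfrak{n},\mathfrak{n}^2]=\mathfrak{n}^3\subseteq Z(\mathfrak{n})$, so Theorem~\ref{mainthm2} applies verbatim. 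With that one substitution your argument becomes the paper's proof.
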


\begin{proof}
According to the result above, it is sufficient to show that there exists a derivation $D$ which sends $\mathfrak{n}$ to the center. Since $\mathfrak{n}$ is an algebra of nilindex $4,$ we have $\mathfrak{n}^4=0$ and $0\neq\mathfrak{n}^3\subseteq Z(\mathfrak{n}).$ Thus, there exist $x\in\mathfrak{n}, \ y\in\mathfrak{n}^{2}$ such that $[x,y]\neq0.$ Consider an inner derivation $ad_x.$ Having $[x,y]\in[\mathfrak{n},\mathfrak{n}^{2}]\subseteq\mathfrak{n}^{3}\subseteq Z(\mathfrak{n})$ in mind, we have a derivation $D=ad_x$ which satisfies with the condition of Theorem \ref{mainthm2}.
\end{proof}

\begin{corollary}{\label{cor0}}
Let $\mathfrak{n}$ be a nilpotent Lie algebra with $\mathfrak{n}^{p}=0$ $(p\geq5)$. If $[\mathfrak{n}^{p-3},\mathfrak{n}^{2}]\neq0,$ then $\mathfrak{n}$ admits a local derivation which is not derivation.
\end{corollary}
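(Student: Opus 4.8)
The plan is to reduce Corollary \ref{cor0} to Theorem \ref{mainthm2} by exhibiting a single derivation $\rm D$ of $\mathfrak{n}$ satisfying ${\rm D}(\mathfrak{n}^2)\subseteq Z(\mathfrak{n})$ and verifying that it is nontrivial on $\mathfrak{n}^2$. The natural candidate, mirroring the proof of the preceding proposition, is an inner derivation ${\rm D}=ad_x$ for a suitably chosen $x$. First I would unwind the hypothesis $[\mathfrak{n}^{p-3},\mathfrak{n}^2]\neq0$: it guarantees the existence of an element $x\in\mathfrak{n}^{p-3}$ and an element $y\in\mathfrak{n}^2$ with $[x,y]\neq0$. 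This choice of $x$ is the crux, since taking $x$ from the deep term $\mathfrak{n}^{p-3}$ forces $ad_x$ to push $\mathfrak{n}^2$ far down the lower central series.

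The key computation is then to check the containment ${\rm D}(\mathfrak{n}^2)\subseteq Z(\mathfrak{n})$. For any $z\in\mathfrak{n}^2$ we have ${\rm D}(z)=[x,z]\in[\mathfrak{n}^{p-3},\mathfrak{n}^2]$. I would invoke the standard lower-central-series containment $[\mathfrak{n}^a,\mathfrak{n}^b]\subseteq\mathfrak{n}^{a+b}$, which here gives $[\mathfrak{n}^{p-3},\mathfrak{n}^2]\subseteq\mathfrak{n}^{p-1}$. Since $\mathfrak{n}^p=0$, the term $\mathfrak{n}^{p-1}$ is annihilated by all of $\mathfrak{n}$, hence $\mathfrak{n}^{p-1}\subseteq Z(\mathfrak{n})$. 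Therefore ${\rm D}(\mathfrak{n}^2)\subseteq\mathfrak{n}^{p-1}\subseteq Z(\mathfrak{n})$, exactly as required. The nontriviality is immediate: by the choice of $x$ and $y$ we have ${\rm D}(y)=[x,y]\neq0$ with $y\in\mathfrak{n}^2$, so ${\rm D}$ does not vanish on $\mathfrak{n}^2$.

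With such a ${\rm D}$ in hand, the condition $p\geq5$ ensures that the nilindex satisfies $p\geq4$, so Theorem \ref{mainthm2} applies verbatim and yields a local derivation of $\mathfrak{n}$ that is not a derivation. I would close by simply citing Theorem \ref{mainthm2}.

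I do not anticipate a genuine obstacle here; the statement is essentially a corollary in the literal sense, and the only point requiring any care is confirming the containment $[\mathfrak{n}^{p-3},\mathfrak{n}^2]\subseteq\mathfrak{n}^{p-1}$ and the inclusion $\mathfrak{n}^{p-1}\subseteq Z(\mathfrak{n})$. The former follows by an elementary induction on the lower central series using the Jacobi identity (or can be quoted as a known fact about nilpotent Lie algebras), and the latter is already noted in the preliminary remarks of the paper. The mild subtlety worth flagging is that one must check $p-3\geq2$, i.e. that $\mathfrak{n}^{p-3}$ sits inside $\mathfrak{n}^2$ so that the hypothesis is not vacuous relative to the setup; this is guaranteed precisely by $p\geq5$, which is why the stronger hypothesis (as opposed to $p\geq4$) is imposed in the statement.
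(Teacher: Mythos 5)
Your proposal is correct and is essentially identical to the paper's own proof: both take $x\in\mathfrak{n}^{p-3}$, $y\in\mathfrak{n}^2$ with $[x,y]\neq0$, form the inner derivation $ad_x$, use $[\mathfrak{n}^{p-3},\mathfrak{n}^{2}]\subseteq\mathfrak{n}^{p-1}\subseteq Z(\mathfrak{n})$, and invoke Theorem \ref{mainthm2}. Your closing remark about $p\geq5$ is the only inessential slip: the argument (and the hypothesis) already makes sense for $p=4$, where $\mathfrak{n}^{p-3}=\mathfrak{n}$ and $[\mathfrak{n},\mathfrak{n}^2]=\mathfrak{n}^3\neq0$ automatically; the paper restricts to $p\geq5$ simply because the $p=4$ case is stated separately as the preceding proposition.
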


\begin{proof}Let $\mathfrak{n}$ be a nilpotent Lie algebra of nilindex $p$ $(p\geq5).$ The condition $[\mathfrak{n}^{p-3},\mathfrak{n}^{2}]\neq0$ guarantees the existence of $x\in\mathfrak{n}^{p-3}, \ y\in\mathfrak{n}^{2}$ such that $[x,y]\neq0.$ Consider an inner derivation $ad_x.$ Since $[x,y]\in[\mathfrak{n}^{p-3},\mathfrak{n}^{2}]\subseteq\mathfrak{n}^{p-1}\subseteq Z(\mathfrak{n}),$ we have a derivation $d=ad_x$ which satisfies with the condition of Theorem \ref{mainthm2}.
\end{proof}

Here, for this kind of algebras there is an inner derivation providing the existence of pure local derivation. However, there are many nilpotent Lie algebras for which $[\mathfrak{n}^{p-3},\mathfrak{n}^{2}]=0,$ so we can not construct a local derivation by using inner derivations. Interestingly, for these algebras there exists outer derivations fulfilling the condition of Theorem \ref{mainthm2}.

For instance, consider the following example:
\begin{example}{\label{exam2}} Let $n\geq3$ and $\mathfrak{w}$ be a complex $n$-dimensional Witt algebra with basis  $\mathcal{B}=\{e_1,e_2,\dots,e_n\}$ and the following basis products:
\[[e_i,e_j]=(j-i)e_{i+j}, \quad 1\leq i,j\leq n, \ i+j\leq n.\]
Note that $\mathfrak{w}^k=span\{e_{k+1},\dots,e_n\}$ and $Z(\mathfrak{w})=\{e_n\}.$ Thus, $\mathfrak{w}^n=0$ which implies $[\mathfrak{w}^{n-3},\mathfrak{w}^{2}]=0.$ It means that we can not construct an inner derivation satisfying the condition of Theorem \ref{mainthm2}. However, a linear mapping $d$ defined as
\begin{align*}
d(e_2)=e_{n-1}, && d(e_3)=(n-2)e_n, && d(e_i)=0, \quad i\neq2,3,
\end{align*}
is a derivation of $\mathfrak{w}.$ Moreover, this derivation sends $e_3$ to the center of $\mathfrak{w}.$ Thus, by Theorem \ref{mainthm2}, a linear transformation $\Delta$ defined as
\begin{align*}
\Delta(e_3)=(n-2)e_n, &\quad \Delta(e_i)=0, \quad i\neq3,
\end{align*}
is local derivation which is not derivation.
\end{example}

Now, we generalize Example \ref{exam2}.
\begin{proposition}{\label{prop2gen}}
Let $\mathfrak{n}$ be a two-generated nilpotent Lie algebra. Then, $\mathfrak{n}$ admits a local derivation which is not derivation.
\end{proposition}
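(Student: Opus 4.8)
The plan is to exhibit a derivation $D$ of $\mathfrak{n}$ with $D(\mathfrak{n}^2)\subseteq Z(\mathfrak{n})$ and $D|_{\mathfrak{n}^2}\neq 0$, and then to invoke Theorem \ref{mainthm2}. I first dispose of the degenerate cases: if $\mathfrak{n}$ is abelian then it is the two-dimensional algebra, for which $\LocDer(\mathfrak{n})=\Der(\mathfrak{n})$, so I take $\mathfrak{n}$ to be non-abelian; and if its nilindex is $p=3$ then $\mathfrak{n}$ is $2$-step nilpotent and Theorem \ref{mainthm1} already applies. Thus the essential case is $p\geq 4$, that is, nilpotency class $c\bydef p-1\geq 3$.

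The construction goes through the free cover. I write $\mathfrak{n}=\frf/\mathfrak{i}$, where $\frf$ is the free nilpotent Lie algebra of class $c$ on two generators $x_1,x_2$ (mapping onto a minimal generating set $e_1,e_2$ of $\mathfrak{n}$), and $\mathfrak{i}\subseteq\frf^2$ is the defining ideal. Let $\frf=\bigoplus_{k=1}^{c}\frf_k$ be its natural grading, so that $\frf^k=\bigoplus_{j\geq k}\frf_j$ and, crucially, $\dim\frf_2=1$ with $\frf_2=\espan\{[x_1,x_2]\}$ — this is exactly where the hypothesis of two generators enters. The key step is to show $\mathfrak{i}\subseteq\frf^3$. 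Indeed, if some $a\in\mathfrak{i}$ had a nonzero degree-two component, then since $\dim\frf_2=1$ we would get $[x_1,x_2]\in\mathfrak{i}+\frf^3$, whence $\mathfrak{n}^2=\mathfrak{n}^3$ and therefore $\mathfrak{n}^2=0$ by nilpotency, contradicting non-abelianness.

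Next I build the derivation. For $a,b\in\frf_{c-1}$ let $D_0$ be the unique derivation of $\frf$ with $D_0(x_1)=a$ and $D_0(x_2)=b$; such a derivation exists because on a free nilpotent Lie algebra any assignment of images to the generators extends. Since $D_0$ raises the grading degree by exactly $c-2$, we have $D_0(\frf_k)\subseteq\frf_{k+c-2}$; in particular $D_0(\frf^3)=0$ and $D_0(\frf^2)\subseteq\frf_c$. Combined with $\mathfrak{i}\subseteq\frf^3$ this gives $D_0(\mathfrak{i})=0$, so $D_0$ descends to a derivation $D$ of $\mathfrak{n}$ with $D(\mathfrak{n}^2)\subseteq\mathfrak{n}^{p-1}\subseteq Z(\mathfrak{n})$. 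Finally, as $a,b$ range over $\frf_{c-1}$ the value $D_0([x_1,x_2])=[a,x_2]+[x_1,b]$ ranges over $[\frf_1,\frf_{c-1}]=\frf_c$, and $\frf_c\not\subseteq\mathfrak{i}$ because $\mathfrak{n}^{p-1}\neq0$; choosing $a,b$ with $D_0([x_1,x_2])\notin\mathfrak{i}$ makes $D([e_1,e_2])\neq0$ while $[e_1,e_2]\in\mathfrak{n}^2\setminus\mathfrak{n}^3$ (it spans the nonzero space $\mathfrak{n}^2/\mathfrak{n}^3$). Hence $D$ meets the hypotheses of Theorem \ref{mainthm2}, which yields the desired pure local derivation.

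The main obstacle is descent: a linear map prescribed on the generators of a non-free algebra need not respect the relations, so a priori there is no reason for our candidate to be well-defined on $\mathfrak{n}$. What rescues the argument is the numerical coincidence that a derivation raising the grading degree by precisely $c-2$ annihilates $\frf^3$, together with the containment $\mathfrak{i}\subseteq\frf^3$; the latter is the one place where $\dim\frf_2=1$, i.e.\ the assumption that $\mathfrak{n}$ is two-generated, is indispensable. The remaining verifications are routine bookkeeping with the grading.
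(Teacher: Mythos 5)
Your argument is correct, and it reaches the conclusion by a genuinely different route from the paper's. Both proofs reduce the statement to Theorem \ref{mainthm2} by producing a derivation $D$ with $D(\mathfrak{n}^2)\subseteq Z(\mathfrak{n})$ that does not vanish on $\mathfrak{n}^2\setminus\mathfrak{n}^3$, but they construct it differently. The paper works entirely inside $\mathfrak{n}$: it fixes a basis adapted to the lower central series with $[e_1,e_2]=e_3$, chooses $x\in\mathfrak{n}^{p-2}$ with, say, $[e_1,x]\neq0$, sets $\delta(e_2)=x$, $\delta(e_3)=[e_1,x]$, $\delta(e_i)=0$ otherwise, and verifies the Leibniz rule case by case on basis products (the checks close up because $[\mathfrak{n}^{p-2},\mathfrak{n}^2]\subseteq\mathfrak{n}^p=0$ and $[e_1,x]$ is central). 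You instead pass to the free nilpotent cover $\mathfrak{f}$ of class $c=p-1$ on two generators, prove $\mathfrak{i}\subseteq\mathfrak{f}^3$ from $\dim\mathfrak{f}_2=1$, and descend a homogeneous derivation raising degree by $c-2$. Your route buys a cleaner well-definedness argument (no case-by-case Leibniz verification), isolates exactly where two-generatedness enters, and delivers $D([e_1,e_2])\neq0$ with $[e_1,e_2]\in\mathfrak{n}^2\setminus\mathfrak{n}^3$, which is precisely the nondegeneracy that the proof of Theorem \ref{mainthm2} actually uses; the cost is the machinery of free nilpotent Lie algebras (universal property, grading, extension of generator assignments to derivations), which the paper's bare-hands computation avoids. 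In substance the two derivations coincide: taking $a=0$ and $b$ a preimage of $x$ in your construction essentially recovers the paper's $\delta$. Finally, your explicit treatment of the degenerate cases is more careful than the paper's, which tacitly assumes $\mathfrak{n}$ nonabelian and quietly invokes Theorem \ref{mainthm2} (stated only for $p\geq4$) even when $p=3$; note only that your reduction of the nilindex-$3$ case to Theorem \ref{mainthm1} imports the hypothesis $\mathrm{char}\,\mathbb{F}\neq2$, which Proposition \ref{prop2gen} does not state, so that case is covered by your argument (and, implicitly, by the paper's) only under that restriction.
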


\begin{proof}
At first, we construct a derivation which sends $\mathfrak{n}^2$ to the center of the algebra $Z(\mathfrak{n})$. Let $\mathcal{B}=\{e_1,e_2,\dots,e_n\}$ be a basis and $e_1,e_2$ be generator elements of $\mathfrak{n}.$ Suppose that, $\mathfrak{n}^{p}=0$ for some $p.$ Without loss of generalization, we assume that $[e_1,e_2]=e_3.$ Since $\mathfrak{n}^{p-1}\neq0,$ there exists $x\in\mathfrak{n}^{p-2}$ such that either $[e_1,x]\neq0$ or $[e_2,x]\neq0.$ Let $[e_1,x]=y\in\mathfrak{n}^{p-1}.$ Now, we show that a linear mapping $\delta$ defined as
\begin{align*}
\delta(e_2)=x, \quad  \delta(e_3)=y,\quad\quad
 \delta(e_i)=0, \quad  i\neq2,3
\end{align*}
is a derivation of $\mathfrak{n}.$ Indeed, we have
\begin{align*}
\delta([e_1,e_2])&=\delta(e_3)=y=[e_1,x]=[\delta(e_1),e_2]+[e_1,\delta(e_2)],\\
\delta([e_1,e_i])&=\delta\big(\sum_{j\geq4} (\ast)e_j\big)=0=[\delta(e_1),e_i]+[e_1,\delta(e_i)], \quad i\geq3,\\
\delta([e_2,e_i])&=\delta\big(\sum_{j\geq4} (\ast)e_j\big)=0=[\delta(e_2),e_i]+[e_2,\delta(e_i)], \quad i\geq3,\\
 \delta([e_i,e_j])&=0=[\delta(e_i),e_j]+[e_i,\delta(e_j)], &&& i,j\neq2,3.
\end{align*}

Hence, $\delta$ is a derivation. Moreover, $\delta$ fulfills the condition $\delta(\mathfrak{n}^2)\subseteq Z(\mathfrak{n}).$ Thus, by Theorem \ref{mainthm2}, $\mathfrak{n}$ admits a local derivation which is not derivation. The case $[e_1,x]=0, \ [e_2,x]\neq0$ can be proved similarly.
\end{proof}

In fact, a Lie algebra can not be generated by one element. Generally, we do not know the situation in other non-associative algebras. There exists nilpotent algebras for which every local derivation is derivation. For instance, consider the following example:

\begin{example}
Let $\mathfrak{a}$ be a complex $n$-dimensional algebra with a basis $\mathcal{B}=\{e_1,e_2,\dots,e_n\}$ $(n\geq3)$ and multiplication table
\begin{align}{\label{1gen}}
e_ie_i=e_{i+1}, \quad 1\leq i\leq n-1.
\end{align}

Direct computations show that every derivation $D$ of $\mathfrak{a}$ acts on basis elements as follows:
\begin{align*}
D(e_1)=\alpha e_1+\beta e_n,\qquad
D(e_i)=2^{i-1}\alpha e_i,
\end{align*}
where $\alpha,\beta\in\mathbb{C},$ $2\leq i\leq n.$

We shall show that every local derivation on $\mathfrak{a}$ is a derivation. From the definition of local derivation, we derive that any local derivation $\Delta$ acts on basis elements as
\begin{align*}
\Delta(e_1)=\delta e_1+\zeta e_n, \qquad
\Delta(e_i)=\xi_i e_i,
\end{align*}
where $\delta,\zeta,\xi_i\in\mathbb{C},$ $2\leq i\leq n.$

To prove that ${\rm LocDer}(\mathfrak{a})={\rm Der}(\mathfrak{a}),$ we need to show that $\xi_i=2^{i-1}\delta$ for any $i=1,\dots n.$

Take an element $x=e_1+e_{i}, \ (2\leq i\leq n-1).$ Then, there exists a derivation
$D_x$ such that $D_x(x)=\Delta_x(x).$ Hence, we derive
\begin{align*}
D_x(x)&=\alpha_x e_1+2^{i-1}\alpha_x e_i+\beta_xe_n=\delta e_1+\xi_ie_i+\zeta e_n=\Delta(x).
\end{align*}
Comparing the corresponding coefficients we have $\xi_i=2^{i-1}\delta.$ Thus, it only remains to show that $\xi_n=2^{n-1}\delta.$ By taking an element $x=e_{n-1}+e_n$ and applying $D_x(x)=\Delta_x(x)$ we have
$$
D_x(x)=2^{n-2}\alpha_x e_{n-1}+2^{n-1}\alpha_x e_n=2^{n-2}\delta e_{n-1}+\xi_ne_n=\Delta(x).
$$
Comparing respective coefficients, we obtain the required relation. So, $\Delta$ is a derivation.

\end{example}

The example above is one generated algebra. In general, there exists a two-generated algebra for which every local derivation is a derivation.
\begin{example}
Let $\mathfrak{c}$ be a complex $6$-dimensional two-generated commutative algebra with a basis $\mathcal{B}=\{e_1,e_2,\dots,e_6\}$ $(n\geq3)$ and multiplication table
\begin{align*}
e_1e_2&=e_{3}, & e_1e_3&=e_{4}, & e_1e_4&=e_{5}, & e_2e_3&=e_{5}, & e_2e_4&=e_{6},
\end{align*}

Direct computations show that every derivation $D$ of $\mathfrak{c}$ has the following form:
\[
\begin{pmatrix}
\alpha & 0 & 0 & 0 & 0 & 0 \\

0 & 2\alpha & 0 & 0 & 0 & 0 \\

0 & 0 & 3\alpha & 0 & 0 & 0 \\

0 & 0 & 0 & 4\alpha & 0 & 0 \\

a_{51} & a_{52} & 0 & 0 & 5\alpha & 0 \\

a_{61} & a_{62} & 0 & 0 & 0 & 6\alpha \\

\end{pmatrix}.
\]

Let $\Delta$ be a local derivation of $\mathfrak{c}.$ Then, it has the following matrix form:
\[
\begin{pmatrix}
\mu_1 & 0 & 0 & 0 & 0 & 0 \\

0 & \mu_2 & 0 & 0 & 0 & 0 \\

0 & 0 & \mu_3 & 0 & 0 & 0 \\

0 & 0 & 0 & \mu_4 & 0 & 0 \\

b_{51} & b_{52} & 0 & 0 & \mu_5 & 0 \\

b_{61} & b_{62} & 0 & 0 & 0 & \mu_6 \\

\end{pmatrix}.
\]

Consider the elements $e_1+e_2+e_3+e_4\in\mathfrak{c}.$ By definition, we have $\Delta(e_1+e_2+e_3+e_4)=D(e_1+e_2+e_3+e_4).$ Hence, we have
\begin{align*}
\Delta(e_1+e_2+e_3+e_4)&=\alpha e_1+2\alpha e_2+3\alpha e_3+4\alpha e_4+(a_{51}+a_{52})e_5+(a_{61}+a_{62})e_6\\
&=\mu_1 e_1+\mu_2 e_2+\mu_3 e_3+\mu_4 e_4+(b_{51}+b_{52})e_5+(b_{61}+b_{62})e_6\\
&=D(e_1+e_2+e_3+e_4).
\end{align*}
Comparing the corresponding coefficients, we have $\mu_2=2\mu_1, \ \mu_3=3\mu_1, \ \mu_4=4\mu_1.$ Applying the same argument for the element $e_4+e_5+e_6,$ we obtain $\mu_5=5\mu_1, \ \mu_6=6\mu_1.$ Thus, $\Delta$ is a derivation.

\end{example}


\section{Local automorphisms of nilpotent Lie algebras}

In this part, we will show that any $2$-step nilpotent Lie algebra admits a local automorphism which is not derivation. Moreover, a nilpotent Lie algebra admits pure local automorphism under the condition of Theorem \ref{mainthm2}. At first, let us show that there exists a non unipotent automorphism on a $2$-step nilpotent Lie algebra over the field of characteristic zero.

Let $\mathfrak{n}$ be a $2$-step nilpotent Lie algebra with a basis $\mathcal{B}=\{e_1,e_2,\dots,e_n\}.$ Consider a linear mapping $\psi$ defined as
\begin{align*}
\psi(e_i)&=\varepsilon e_i, \quad \mbox{if }   \quad e_i\in\mathfrak{n}\setminus\mathfrak{n}^2,\\
\psi(e_i)&=\varepsilon^{2} e_i,  \quad \mbox{if }    \quad e_i\in\mathfrak{n}^2,
\end{align*}
where $\varepsilon\in\mathbb{F}\setminus\{0\}.$ Then, $\psi$ is an automorphism of $\mathfrak{n}.$ Indeed, let $x,y\in\mathfrak{n}$ and $x=\sum\limits_{i}\alpha_ie_i, \ y=\sum\limits_{j}\beta_je_j.$ Then, we have
\begin{align*}
\psi([x,y])&=\psi\Big(\sum_{i,j}\alpha_i\beta_j[e_i,e_j]\Big)=\sum_{i,j}\alpha_i\beta_j\psi\left([e_i,e_j]\right)\\
                &=\varepsilon^2\sum_{i,j}\alpha_i\beta_j[e_i,e_j]=\sum_{i,j}\alpha_i\beta_j[\varepsilon e_i,\varepsilon e_j]\\
                &=\sum_{i,j}\alpha_i\beta_j[\psi(e_i),\psi(e_j)]=[\psi(x),\psi(y)].
\end{align*}
This implies that $\psi$ is an automorphism.

Consider the derivation ${\rm D}_{\lambda}$ which is defined as \eqref{der1}. Then, ${\rm exp}({\rm D}_{\lambda})$ is an automorphism that acts on basis elements as follows:
\begin{align}{\label{aut1}}
\begin{array}{cccccccll}
{\rm exp}({\rm D}_{\lambda})(e_i)={\bf e}^{\lambda}e_i,& & \mbox{if } &  e_i\in\mathfrak{n}\setminus\mathfrak{n}^2,\\
{\rm exp}({\rm D}_{\lambda})(e_i)={\bf e}^{2\lambda} e_i,& & \mbox{if } &   e_i\in\mathfrak{n}^2,
\end{array}
\end{align}
where ${\bf e}$ is the exponential constant.

\begin{theorem}{\label{mainthm3}}
Let $\mathfrak{n}$ be a $2$-step nilpotent Lie algebra over a field $\mathbb{F}$ $(char\mathbb{F}\neq2).$ Then, $\mathfrak{n}$ admits a local automorphism which is not automorphism.

\end{theorem}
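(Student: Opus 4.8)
The plan is to run the automorphism analogue of the proof of Theorem~\ref{mainthm1}, replacing the rank-one derivations of Case~1 by unipotent automorphisms and the diagonal derivation ${\rm D}_\lambda$ of Case~2 by the scaling automorphism $\psi$. Since $\mathfrak{n}$ is $2$-step nilpotent we have $\mathfrak{n}^3=0$, hence $[\mathfrak{n},\mathfrak{n}^2]=0$ and $\mathfrak{n}^2\subseteq Z(\mathfrak{n})$. Fix a basis $\mathcal{B}=\{e_1,\dots,e_m,\dots,e_n\}$ with $e_1,\dots,e_m$ the generators and $e_{m+1},\dots,e_n$ spanning $\mathfrak{n}^2$, and assume for the moment that $\mathbb{F}$ contains an element $\varepsilon_0$ with $\varepsilon_0^2\neq1$. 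I would define the linear bijection $\nabla$ by $\nabla(e_i)=e_i$ for $1\le i\le m$ and $\nabla(e_i)=\varepsilon_0^2e_i$ for $i>m$. Choosing generators $e_r,e_t$ with $[e_r,e_t]=e_q\neq0$ (these exist since $\mathfrak{n}^2\neq0$), one has $\nabla([e_r,e_t])=\varepsilon_0^2e_q\neq e_q=[\nabla(e_r),\nabla(e_t)]$, so $\nabla$ is not an automorphism.

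It then remains to verify that $\nabla$ is a local automorphism, that is, for every $x=\sum_i\lambda_ie_i$ there is $\varphi_x\in\Aut(\mathfrak{n})$ with $\varphi_x(x)=\nabla(x)$. Writing $w:=\sum_{i>m}\lambda_ie_i\in\mathfrak{n}^2\subseteq Z(\mathfrak{n})$ we have $\nabla(x)-x=(\varepsilon_0^2-1)w$. If some generator coefficient is nonzero, say $\lambda_j\neq0$ with $1\le j\le m$, I would take the linear map $N$ sending $e_j\mapsto\frac{\varepsilon_0^2-1}{\lambda_j}w$ and all other basis vectors to $0$. Exactly as in Case~1 of Theorem~\ref{mainthm1}, $N$ sends a generator into $Z(\mathfrak{n})$ and annihilates $\mathfrak{n}^2$, so it is a derivation with $N^2=0$; consequently $\varphi_x:={\rm exp}(N)=\id+N$ is an automorphism (the series terminates, so no hypothesis on the characteristic is needed here), and
\[
\varphi_x(x)=x+N(x)=x+(\varepsilon_0^2-1)w=\nabla(x),
\]
as required.

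The remaining case is $x\in\mathfrak{n}^2$, where all generator coefficients vanish and $\nabla(x)=\varepsilon_0^2x$; here I would simply take $\varphi_x:=\psi$ with parameter $\varepsilon_0$, which acts as $\varepsilon_0^2\cdot\id$ on $\mathfrak{n}^2$ and hence satisfies $\varphi_x(x)=\varepsilon_0^2x=\nabla(x)$, completing the proof that $\LocAut(\mathfrak{n})\neq\Aut(\mathfrak{n})$. The step I expect to be the genuine obstacle is precisely this pure-$\mathfrak{n}^2$ case. The unipotent maps used above always act trivially on $\mathfrak{n}^2$, so moving a central element forces a non-unipotent automorphism, and the only one available in general is $\psi$, whose effect on $\mathfrak{n}^2$ is multiplication by a \emph{square} $\varepsilon_0^2$. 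Thus everything hinges on the existence of $\varepsilon_0$ with $\varepsilon_0^2\neq1$, i.e.\ a nontrivial square in $\mathbb{F}$: this holds automatically for all fields with more than three elements, while over $\mathbb{Z}_2$ (excluded by $\chr\mathbb{F}\neq2$, cf.\ the degenerate behaviour in the example over $\mathbb{Z}_2$) and over the three-element field every nonzero square equals $1$. Over such a small field the uniform scaling $\psi$ is useless, and one would have to scale $\mathfrak{n}^2$ non-uniformly through the finer torus of automorphisms $e_i\mapsto\mu_ie_i$; realizing a prescribed scalar on all of $\mathfrak{n}^2$ this way is constrained by the bracket relations among the generators, and handling this carefully is where the real work lies.
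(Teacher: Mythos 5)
Your proposal follows the same skeleton as the paper's proof of Theorem~\ref{mainthm3}: the same map $\nabla$ (identity on generators, a fixed non-unit scalar on $\mathfrak{n}^2$), the same bracket witness $[e_r,e_t]=e_q\neq0$ showing $\nabla\notin\Aut(\mathfrak{n})$, the same rank-one unipotent automorphisms $\mathrm{Id}+N$ when some generator coefficient of $x$ is nonzero, and a global scaling automorphism when $x\in\mathfrak{n}^2$. The one substantive difference is \emph{which} scaling automorphism is used, and it matters. The paper takes $\Phi={\rm exp}({\rm D}_{\bf 1})$, which multiplies $\mathfrak{n}^2$ by ${\bf e}^2$ where ${\bf e}$ is ``the exponential constant''; this makes sense only when $\mathbb{F}$ actually contains such an element (essentially $\mathbb{F}=\mathbb{R}$ or $\mathbb{C}$), even though the theorem is stated for arbitrary $\mathbb{F}$ with $\chr{\mathbb{F}}\neq2$. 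You instead use the purely algebraic automorphism $\psi$ with parameter $\varepsilon_0$, $\varepsilon_0^2\neq1$ --- an automorphism the paper itself constructs immediately before the theorem and then never uses. This buys real generality: your argument is valid over every field with at least four elements, including fields of positive characteristic (indeed even characteristic $2$, e.g.\ $\mathbb{F}_4$, where no hypothesis of the theorem forces failure of your construction). You also avoid two slips in the paper's Case~2: the paper asserts ${\rm D}_{\bf 1}^2=0$, which is false since ${\rm D}_{\bf 1}$ is diagonal with eigenvalues $1$ and $2$ (the conclusion $\Phi(x)=\nabla(x)$ still holds there, but only because $\nabla$ is \emph{defined} to agree with $\Phi$ on $\mathfrak{n}^2$), and it mislabels that case as ``$\lambda_j\neq0$ for all $j$'' instead of $\lambda_j=0$ for all $j\leq m$.

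Concerning the obstacle you flag at the end: you are right that over $\mathbb{F}_3$ every nonzero square equals $1$, so $\psi$ acts trivially on $\mathfrak{n}^2$ and your Case~2 collapses; strictly speaking this leaves a gap relative to the statement, which assumes only $\chr{\mathbb{F}}\neq2$ and hence includes $\mathbb{F}_3$. But this is not a defect peculiar to your argument: the paper's own proof has the same hole, and a larger one, since ${\rm exp}({\rm D}_{\bf 1})$ is meaningless over $\mathbb{F}_3$ (or over $\mathbb{Q}$, or any field without an exponential). Your proof therefore covers strictly more fields than the paper's; the $\mathbb{F}_3$ case is genuinely open in both, and closing it would require, as you note, realizing the scalar $-1$ on a prescribed central element by a non-uniform scaling of the generators compatible with the relations --- which works, for instance, for the Heisenberg algebras by negating one generator in each canonical pair, but is not provided by any argument in the paper either.
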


\begin{proof}
Let ${\rm D}_{\bf 1}$ be a derivation defined as \eqref{der1} and $\Phi={\rm exp}({\rm D}_{\bf 1})$ be corresponding automorphism as \eqref{aut1}. Consider a linear mapping $\nabla$ defined as follows:
\begin{align*}
\nabla(x)&={\rm Id}(x), \ \mbox{ if } x\in\mathfrak{n}\setminus\mathfrak{n}^2,\\
\nabla(x)&=\Phi(x), \ \mbox{ if } x\in\mathfrak{n}^2,
\end{align*}
where ${\rm Id}$ is the identity map.

We shall show that $\nabla$ is a local automorphism which is not an automorphism. First, let us prove that it is not an automorphism. Since $\mathfrak{n}$ is nonabelian, there exist $e_r,e_t\in\mathfrak{n}\setminus\mathfrak{n}^2, \ e_k\in\mathfrak{n}^2$ such that
$[e_r,e_t]=e_k.$ Hence, we derive
\[[\nabla(e_r),\nabla(e_t)]=[e_r,e_t]=e_k\neq{\bf e}^2e_k=\nabla(e_k)=\nabla([e_r,e_t]),\]
where ${\bf e}$ is the exponential constant. Thus, $\nabla$ is not automorphism.

Now, we show that $\nabla$ is a local automorphism. Let $\mathcal{B}=\{e_1,e_2,\dots,e_m,\dots, e_n\}$ be a basis of $\mathfrak{n},$ where the number $m$ means the number of generators of $\mathfrak{n}.$ For an element $x=\sum\limits_{i=1}^{n}\lambda_ie_i,$ we have
\[\nabla(x)=\sum_{i=1}^{m}\lambda_ie_i+{\bf e}^2\sum_{i=m+1}^{n}\lambda_ie_i.\]

 Consider the following possible cases:
 \begin{itemize}
 \item[1.] If $\lambda_j\neq0$ for some $j, \ (1\leq j\leq m),$ then we choose a derivation $D_x$ that acts on basis elements as follows:
 \begin{align*}
 D_x(e_j)&=\frac{{\bf e}^2-1}{\lambda_j}\sum_{i=m+1}^{n}\lambda_ie_i,\\
 D_x(e_r)&=0,
 \end{align*}
 for all $r\in\{1,2,\dots,m\}, \ r\neq j.$
 Using this derivation, we construct an automorphism $\Phi_x={\rm exp}(D_x).$ Having in mind $D_x^2=0,$ we have
 \[{\rm exp}(D_x)={\rm Id}+D_x.\]
 Hence, we derive
 \begin{align*}
 {\rm exp}(D_x)(x)&={\rm Id}(x)+D_x(x)=x+\sum_{i=1}^{n}\lambda_iD_x(e_i)\\
 &=x+\lambda_jD_x(e_j)=\sum\limits_{i=1}^{n}\lambda_ie_i+\lambda_j\frac{{\bf e}^2-1}{\lambda_j}\sum_{i=m+1}^{n}\lambda_ie_i\\
 &=\sum_{i=1}^{m}\lambda_ie_i+{\bf e}^2\sum_{i=m+1}^{n}\lambda_ie_i=\nabla(x).
 \end{align*}

 \item[2.] If $\lambda_j\neq0$ for all $j$ $(1\leq j\leq m),$ then we choose a derivation ${\rm D}_{\bf 1}$ and construct an automorphism $\Phi={\rm exp}({\rm D}_{\bf 1}).$ Choosing $\Phi_x=\Phi$ and taking ${\rm D}_{\bf 1}^2=0$ into account, we derive
    \[\Phi_x(x)={\rm exp}({\rm D}_{\bf 1})(x)=x+{\rm D}_{\bf 1}(x)=\Phi(x)=\nabla(x).\]

 \end{itemize}

Thus, $\nabla$ is a local automorphism. The proof is complete.
\end{proof}

To illustrate Theorem \ref{mainthm3}, consider the following example:

\begin{example}
Let $\mathfrak{h}_n$ be complex $(2n+1)$-dimensional Heisenberg algebra with a basis $\mathcal{B}=\{e_{-n},\dots,e_{-1},e_0,e_1,\dots,e_n\}$ and multiplication table as \eqref{heisenberg}. Consider a linear mapping $\nabla$ defined as
\begin{align*}
\nabla(e_i)=e_i, \quad i\neq0,\qquad
\nabla(e_{0})=2e_{0}.
\end{align*}

By Theorem \ref{mainthm3}, $\nabla$ is a local automorphism. Since
\[[\nabla(e_1),\nabla(e_{-1})]=[e_1,e_{-1}]=e_0\neq2e_0=\nabla(e_0)=\nabla([e_1,e_{-1}]),\]
we conclude that $\nabla$ is not an automorphism.




\end{example}

Now, let us give a sufficient condition under which a nilpotent Lie algebra admits pure local automorphism. Interestingly, the conditions under which a nilpotent Lie algebra admitting pure local derivation coincide with the conditions of admitting pure local automorphism.

\begin{theorem}{\label{mainthm4}}
Let $\mathfrak{n}$ be a nilpotent Lie algebra with nilindex $p$ $(p\geq4).$ If there exists a derivation ${\rm D}\in {\rm Der}({\mathfrak{n}})$ such that, ${\rm D}(\mathfrak{n}^2)\subseteq Z(\mathfrak{n}),$ then $\mathfrak{n}$ admits a local automorphism which is not automorphism.
\end{theorem}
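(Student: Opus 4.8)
The plan is to mirror the structure of the proof of Theorem~\ref{mainthm2}, replacing the local derivation $\Delta = {\rm D}_{|\mathfrak{n}^2}$ with a local automorphism built from the same derivation ${\rm D}$. Since $\chr\mathbb{F}=0$ is implicit (we use ${\rm exp}$), the automorphism $\Phi = {\rm exp}({\rm D})$ is available. The key observation is that the hypothesis ${\rm D}(\mathfrak{n}^2)\subseteq Z(\mathfrak{n})$ forces ${\rm D}^2(\mathfrak{n}^2)\subseteq {\rm D}(Z(\mathfrak{n}))$; more importantly, when we restrict attention to the relevant subspaces, the derivations we construct will satisfy ${\rm D}_x^2 = 0$, so that ${\rm exp}({\rm D}_x) = {\rm Id} + {\rm D}_x$ and the exponential is easy to compute explicitly. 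This is exactly the device used in Theorem~\ref{mainthm3}.

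First I would define the candidate local automorphism $\nabla$ by setting $\nabla(x) = x$ for $x\in\mathfrak{n}\setminus\mathfrak{n}^2$ and $\nabla(x) = x + {\rm D}(x)$ for $x\in\mathfrak{n}^2$; equivalently, $\nabla = {\rm Id} + {\rm D}_{|\mathfrak{n}^2}$, using the restriction notation from the general definitions. As in Theorem~\ref{mainthm2}, pick $e_r,e_t\in\mathfrak{n}\setminus\mathfrak{n}^2$ and $e_q\in\mathfrak{n}^2\setminus\mathfrak{n}^3$ with $[e_r,e_t]=e_q$ and ${\rm D}(e_q)\neq 0$. Then $[\nabla(e_r),\nabla(e_t)] = [e_r,e_t] = e_q$, whereas $\nabla([e_r,e_t]) = \nabla(e_q) = e_q + {\rm D}(e_q) \neq e_q$, so $\nabla$ fails to be an automorphism. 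This shows $\nabla$ is a pure candidate.

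Next I would verify that $\nabla$ is a local automorphism by splitting into the same two cases as in Theorem~\ref{mainthm2}. Write $x = \sum_{i=1}^n \lambda_i e_i$; then $\nabla(x) = x + y$ where $y = \sum_{i=m+1}^n \lambda_i {\rm D}(e_i) \in Z(\mathfrak{n})$. In the first case, when some generator coefficient $\lambda_j\neq 0$ ($1\le j\le m$), I define the derivation $D_x$ by $D_x(e_j) = \frac{1}{\lambda_j}y$ and $D_x(e_i)=0$ otherwise; since the image of $D_x$ lies in $Z(\mathfrak{n})$, we have $D_x^2 = 0$, hence $\Phi_x = {\rm exp}(D_x) = {\rm Id} + D_x$ is an automorphism, and $\Phi_x(x) = x + \lambda_j D_x(e_j) = x + y = \nabla(x)$. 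In the second case, when all generator coefficients vanish, then $x\in\mathfrak{n}^2$ and $\nabla(x) = x + {\rm D}(x)$; here I would take $\Phi_x = {\rm exp}({\rm D})$, and since ${\rm D}(x)\in Z(\mathfrak{n})$ for $x\in\mathfrak{n}^2$ we again have ${\rm D}^2(x)=0$ along the orbit of $x$, so ${\rm exp}({\rm D})(x) = x + {\rm D}(x) = \nabla(x)$.

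The main obstacle, and the point requiring care, is the claim that $D_x^2 = 0$ (and the analogous nilpotency of ${\rm D}$ when applied to elements of $\mathfrak{n}^2$), which is what allows the clean identity ${\rm exp} = {\rm Id} + D_x$. For the locally constructed $D_x$ this is immediate because its image lies in the center, which ${\rm D}_x$ annihilates. For the second case one must check that applying ${\rm exp}({\rm D})$ to an element $x\in\mathfrak{n}^2$ collapses to $x+{\rm D}(x)$: this holds because ${\rm D}(x)\in Z(\mathfrak{n})$ and a central element is killed by ${\rm D}$ only if ${\rm D}(Z(\mathfrak{n}))=0$, which is not assumed in general. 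I would therefore handle the second case by instead choosing, for each such $x$, the central-image derivation $D_x(e_{j_0}) = {\rm D}(x)$ on a fixed generator $e_{j_0}$ scaled appropriately—but since all generator coefficients of $x$ vanish this reduction fails, so the correct fix is to note that the full derivation ${\rm D}$ restricted to the one-dimensional span of $x$ followed by one application already lands in the center, and to verify ${\rm exp}({\rm D})(x) = x + {\rm D}(x)$ directly by checking that the higher exponential terms ${\rm D}^k(x)/k!$ vanish for $k\ge 2$; this is the step deserving the most attention in the write-up.
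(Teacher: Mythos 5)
Your strategy mirrors the paper's proof (identity on the generator part, a perturbation of the identity on $\mathfrak{n}^2$, non-automorphy detected on a bracket $[e_r,e_t]=e_q$ with ${\rm D}(e_q)\neq 0$, locality split into the same two cases), but you alter one detail that opens a genuine gap: you define $\nabla={\rm Id}+{\rm D}_{|\mathfrak{n}^2}$, whereas the paper defines $\nabla$ to agree with the automorphism $\Phi={\rm exp}({\rm D})$ itself on $\mathfrak{n}^2$. With your definition, Case 2 (all generator coefficients zero, so $x\in\mathfrak{n}^2$) requires an automorphism sending $x$ to $x+{\rm D}(x)$, and your only candidate is ${\rm exp}({\rm D})$, which works only if ${\rm D}^k(x)=0$ for all $k\geq 2$. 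That does not follow from ${\rm D}(\mathfrak{n}^2)\subseteq Z(\mathfrak{n})$, because the hypothesis says nothing about ${\rm D}$ on the center. Concretely, take the filiform algebra with $[e_1,e_2]=e_3$, $[e_1,e_3]=e_4$ and the derivation ${\rm D}(e_1)=e_1$, ${\rm D}(e_2)=-e_2+e_3$, ${\rm D}(e_3)=e_4$, ${\rm D}(e_4)=e_4$: one checks this is a derivation, the nilindex is $4$, and ${\rm D}(\mathfrak{n}^2)\subseteq\langle e_4\rangle=Z(\mathfrak{n})$, yet ${\rm D}^2(e_3)=e_4\neq 0$ and ${\rm exp}({\rm D})(e_3)=e_3+({\bf e}-1)e_4\neq e_3+{\rm D}(e_3)$. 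You yourself flagged exactly this problem in your final paragraph, but the repair you propose --- ``verify ${\rm exp}({\rm D})(x)=x+{\rm D}(x)$ directly by checking that the higher exponential terms vanish'' --- is circular: those terms are precisely what need not vanish, as the example shows. (Your $\nabla$ may still happen to be a local automorphism in such examples, but proving it would require automorphisms other than ${\rm exp}({\rm D})$, which your argument does not supply.)

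The paper's definition sidesteps this entirely: since $\nabla$ coincides with the genuine automorphism $\Phi$ on all of $\mathfrak{n}^2$, in Case 2 one takes $\Phi_x=\Phi$ and $\Phi_x(x)=\Phi(x)=\nabla(x)$ holds by definition, with no nilpotency of ${\rm D}$ required. The price is paid elsewhere and is affordable: for Case 1 one needs $\nabla(x)-x\in Z(\mathfrak{n})$, which holds because every term ${\rm D}^k(e_i)$ with $k\geq 1$ and $e_i\in\mathfrak{n}^2$ lies in $Z(\mathfrak{n})$ (use ${\rm D}(\mathfrak{n}^2)\subseteq Z(\mathfrak{n})$ together with the fact that any derivation maps the center into itself); and the non-automorphism step then needs $\Phi(e_q)\neq e_q$ rather than merely ${\rm D}(e_q)\neq 0$ --- on this one point your version is actually cleaner than the paper's. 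Your Case 1 is essentially the paper's and carries only the same small caveat the paper also glosses over (the claim $D_x^2=0$ needs $y$ to have no component along the generator $e_j$; a map with central image does not automatically annihilate the center). To close the real gap, adopt the paper's definition of $\nabla$ on $\mathfrak{n}^2$ and rerun your two cases as in Theorem~\ref{mainthm3}.
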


\begin{proof}
Let ${\rm D}$ be derivation such that ${\rm D}(\mathfrak{n}^2)\subseteq Z(\mathfrak{n})$ and $\Phi={\rm exp}({\rm D})$ be an automorphism of $\mathfrak{n}.$ Let $\mathcal{B}=\{e_1,e_2,\dots,e_m,\dots, e_n\}$ be a basis of $\mathfrak{n},$ where the number $m$ means the number of generators of $\mathfrak{n}.$ Consider a linear mapping $\nabla$ defined as follows:
\begin{align*}
\nabla(x)&={\rm Id}(x), \ \mbox{ if } x\in\mathfrak{n}\setminus\mathfrak{n}^2,\\
\nabla(x)&=\Phi(x), \ \mbox{ if } x\in\mathfrak{n}^2,
\end{align*}
where ${\rm Id}$ is the identity map.

We shall prove that $\nabla$ is a local automorphism which is not an automorphism. First, we show that it is not an automorphism. Let
\[[e_r,e_t]=e_k,\]
where $e_r,e_t\in\mathfrak{n}\setminus\mathfrak{n}^2, \ e_k\in\mathfrak{n}^2\setminus\mathfrak{n}^3.$
 Without loss of generality, we may assume that ${\rm D}(e_k)\neq0.$
 Hence, having in mind ${\rm D}^2=0,$ we derive
\[[\nabla(e_r),\nabla(e_t)]=[e_r,e_t]=e_k\neq e_k+{\rm D}(e_k)={\rm exp}({\rm D})(e_k)=\nabla(e_k)=\nabla([e_r,e_t]).\]
 Thus, $\nabla$ is not automorphism.

Now, we show that it is a local automorphism. For an element $x=\sum\limits_{i=1}^{n}\lambda_ie_i,$ we have
\[\nabla(x)=\sum_{i=1}^{n}\lambda_ie_i+\sum_{i=m+1}^{n}\lambda_i{\rm D}(e_i).\]

 Consider the following possible cases:
 \begin{itemize}
 \item[1.] If $\lambda_j\neq0$ for some $j$ $(1\leq j\leq m),$ then we choose a derivation $D_x$ that acts on basis elements as follows:
 \begin{align*}
 D_x(e_j)&=\frac{1}{\lambda_j}\sum_{i=m+1}^{n}\lambda_i{\rm D}(e_i),\\
 D_x(e_r)&=0,
 \end{align*}
 for all $r\in\{1,2,\dots,m\},$ $r\neq j.$
 Using this derivation, we construct an automorphism $\Phi_x={\rm exp}(D_x).$ Taking $D_x^2=0$ into account, we have
 \[{\rm exp}(D_x)={\rm Id}+D_x.\]
 Hence, we derive
 \begin{align*}
 \Phi_x(x)&={\rm Id}(x)+D_x(x)=x+\sum_{i=1}^{n}\lambda_iD_x(e_i)\\
 &=x+\lambda_jD_x(e_j)=\sum\limits_{i=1}^{n}\lambda_ie_i+\lambda_j\frac{1}{\lambda_j}\sum_{i=m+1}^{n}\lambda_i{\rm D}(e_i)\\
 &=\sum_{i=1}^{n}\lambda_ie_i+\sum_{i=m+1}^{n}\lambda_i{\rm D}(e_i)=\nabla(x).
 \end{align*}

 \item[2.] If $\lambda_j\neq0$ for all $j$ $(1\leq j\leq m),$ then we choose a derivation ${\rm D}$ and construct an automorphism $\Phi={\rm exp}({\rm D}).$ Choosing $\Phi_x=\Phi$ and taking ${\rm D}^2=0$ and $x\in\mathfrak{n}^2$ into account, we derive
    \[\Phi_x(x)={\rm exp}({\rm D})(x)=x+{\rm D}(x)=\Phi(x)=\nabla(x).\]

 \end{itemize}

Thus, $\nabla$ is a local automorphism. The proof is complete.
\end{proof}

Now, we state some results which come from Theorem \ref{mainthm4}.

\begin{corollary}{\label{cor1}}
Let $\mathfrak{n}$ be a nilpotent Lie algebra with $\mathfrak{n}^{p}=0$. If $[\mathfrak{n}^{p-3},\mathfrak{n}^{2}]\neq0,$ then $\mathfrak{n}$ admits a local automorphism which is not automorphism.
\end{corollary}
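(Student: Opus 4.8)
The plan is to mirror the proof of Corollary~\ref{cor0}, replacing the appeal to Theorem~\ref{mainthm2} by an appeal to Theorem~\ref{mainthm4}. Since Theorem~\ref{mainthm4} guarantees a pure local automorphism as soon as $\mathfrak{n}$ carries a non-trivial derivation ${\rm D}$ with ${\rm D}(\mathfrak{n}^2)\subseteq Z(\mathfrak{n})$, the entire task reduces to manufacturing one such derivation out of the hypothesis $[\mathfrak{n}^{p-3},\mathfrak{n}^2]\neq0$.

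First I would record that the hypothesis forces $p\geq4$ (so that $\mathfrak{n}^{p-3}$ is a genuine term of the lower central series, and Theorem~\ref{mainthm4} is applicable) and that it produces elements $x\in\mathfrak{n}^{p-3}$ and $y\in\mathfrak{n}^2$ with $[x,y]\neq0$. I would then take ${\rm D}=\ad_x$ and verify the decisive inclusion ${\rm D}(\mathfrak{n}^2)\subseteq Z(\mathfrak{n})$. Using the standard lower central series estimate $[\mathfrak{n}^a,\mathfrak{n}^b]\subseteq\mathfrak{n}^{a+b}$, one obtains
\[
\ad_x(\mathfrak{n}^2)=[x,\mathfrak{n}^2]\subseteq[\mathfrak{n}^{p-3},\mathfrak{n}^2]\subseteq\mathfrak{n}^{p-1}\subseteq Z(\mathfrak{n}),
\]
the last inclusion holding because $[\mathfrak{n},\mathfrak{n}^{p-1}]=\mathfrak{n}^p=0$. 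As $[x,y]\neq0$, the inner derivation $\ad_x$ is non-trivial, so Theorem~\ref{mainthm4} applies and delivers a local automorphism of $\mathfrak{n}$ that is not an automorphism.

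The only point that deserves a second look, and which I expect to be the sole obstacle (albeit a mild one), is the step internal to Theorem~\ref{mainthm4} where a basis element $e_k\in\mathfrak{n}^2\setminus\mathfrak{n}^3$ with ${\rm D}(e_k)\neq0$ is selected. For $\ad_x$ this is automatic: since $x\in\mathfrak{n}^{p-3}$ we have $[x,\mathfrak{n}^3]\subseteq[\mathfrak{n}^{p-3},\mathfrak{n}^3]\subseteq\mathfrak{n}^p=0$, so $\ad_x$ vanishes on $\mathfrak{n}^3$; were it to vanish in addition on every basis vector of $\mathfrak{n}^2\setminus\mathfrak{n}^3$, it would vanish on all of $\mathfrak{n}^2$, contradicting $[x,y]\neq0$. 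Hence a suitable $e_k$ exists and the hypotheses of Theorem~\ref{mainthm4} are met verbatim. Everything else is the routine bookkeeping already carried out in the proof of that theorem, so no further computation is required.
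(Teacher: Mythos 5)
Your proposal is correct and follows essentially the same route as the paper: both extract $x\in\mathfrak{n}^{p-3}$, $y\in\mathfrak{n}^{2}$ with $[x,y]\neq0$ from the hypothesis, take the inner derivation $\mathrm{ad}_x$, observe $\mathrm{ad}_x(\mathfrak{n}^{2})\subseteq[\mathfrak{n}^{p-3},\mathfrak{n}^{2}]\subseteq\mathfrak{n}^{p-1}\subseteq Z(\mathfrak{n})$, and invoke Theorem~\ref{mainthm4}. Your extra checks (that the hypothesis forces $p\geq4$, and that $\mathrm{ad}_x$ is nonzero on some basis element of $\mathfrak{n}^{2}\setminus\mathfrak{n}^{3}$ since it kills $\mathfrak{n}^{3}$) merely make explicit details the paper's proof leaves implicit.
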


\begin{proof}
The condition $[\mathfrak{n}^{p-3},\mathfrak{n}^{2}]\neq0$ yields the existence of elements $x\in\mathfrak{n}^{p-3}, \ y\in\mathfrak{n}^{2}$ such that $[x,y]\neq0.$ Consider an inner derivation $ad_x.$ Since $[x,y]\in[\mathfrak{n}^{p-3},\mathfrak{n}^{2}]\subseteq\mathfrak{n}^{p-1}\subseteq Z(\mathfrak{n}),$ we have a derivation $D=ad_x$ which satisfies with the condition of Theorem \ref{mainthm4}.
\end{proof}

\begin{corollary}
Let $\mathfrak{n}$ be a nilpotent Lie algebra with nilindex $4$. Then, $\mathfrak{n}$ admits a local automorphism which is not automorphism.
\end{corollary}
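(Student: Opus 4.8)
The plan is to reduce the statement to Theorem \ref{mainthm4} by exhibiting a derivation sending $\mathfrak{n}^2$ into the center, which is essentially the case $p=4$ of Corollary \ref{cor1}. Since $\mathfrak{n}$ has nilindex $4$, we have $\mathfrak{n}^4=0$ and $\mathfrak{n}^3\neq0$, and as recalled earlier the terminal nonzero term of the lower central series lies in the center, so $\mathfrak{n}^3\subseteq Z(\mathfrak{n})$. The nonvanishing $\mathfrak{n}^3=[\mathfrak{n},\mathfrak{n}^2]\neq0$ then guarantees the existence of elements $x\in\mathfrak{n}$ and $y\in\mathfrak{n}^2$ with $[x,y]\neq0$.

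Next I would take the inner derivation $D=ad_x$ and verify that it maps $\mathfrak{n}^2$ into the center: for every $z\in\mathfrak{n}^2$ we have $D(z)=[x,z]\in[\mathfrak{n},\mathfrak{n}^2]\subseteq\mathfrak{n}^3\subseteq Z(\mathfrak{n})$, whence $D(\mathfrak{n}^2)\subseteq Z(\mathfrak{n})$. Moreover $D$ is nontrivial, since $D(y)=[x,y]\neq0$. Thus $D$ satisfies every hypothesis required by Theorem \ref{mainthm4}.

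Applying Theorem \ref{mainthm4} to this $D$ immediately produces a local automorphism of $\mathfrak{n}$ which is not an automorphism, completing the argument. I do not expect any genuine obstacle here: the only point to check is the nonvanishing of $\mathfrak{n}^3$, which is automatic for nilindex exactly $4$, and everything else is a direct specialization of the inner-derivation construction already used in Corollary \ref{cor1}.
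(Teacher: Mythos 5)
Your proof is correct and follows essentially the same route as the paper: the paper invokes Corollary \ref{cor1} with $p=4$ (so $\mathfrak{n}^{p-3}=\mathfrak{n}$ and $[\mathfrak{n},\mathfrak{n}^2]=\mathfrak{n}^3\neq0$), whose proof is exactly your inner-derivation argument feeding into Theorem \ref{mainthm4}. You have simply unfolded that corollary's proof in the case $p=4$, with the same derivation $D=\mathrm{ad}_x$ and the same verification $D(\mathfrak{n}^2)\subseteq\mathfrak{n}^3\subseteq Z(\mathfrak{n})$.
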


\begin{proof}
Since the nilindex of $\mathfrak{n}$ is $4,$ we have $[\mathfrak{n},\mathfrak{n}^2]=\mathfrak{n}^3\neq0.$ So, by Corollary \ref{cor1}, $\mathfrak{n}$ admits a pure local derivation.
\end{proof}

\begin{corollary}
Let $\mathfrak{n}$ be a two-generated nilpotent Lie algebra. Then, $\mathfrak{n}$ admits a local automorphism which is not automorphism.
\end{corollary}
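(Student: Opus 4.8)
The plan is to reduce the statement to Theorem~\ref{mainthm4} in exactly the way that Proposition~\ref{prop2gen} reduces its local-derivation analogue to Theorem~\ref{mainthm2}. The key observation is that these two theorems share an identical hypothesis, namely the existence of a derivation ${\rm D}\in{\rm Der}(\mathfrak{n})$ with ${\rm D}(\mathfrak{n}^2)\subseteq Z(\mathfrak{n})$; consequently the very derivation constructed in the proof of Proposition~\ref{prop2gen} already suffices here, and only the final invocation needs to be changed. Before doing so I would dispose of the degenerate case: if $\mathfrak{n}$ is abelian then every local automorphism is automatically an automorphism, so we may assume $\mathfrak{n}$ is nonabelian, whence its nilindex $p$ satisfies $p\geq3$.

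I would then split on the value of $p$. If $p=3$, the algebra is $2$-step nilpotent and Theorem~\ref{mainthm3} already provides a local automorphism that is not an automorphism, so nothing further is required. If $p\geq4$, I would recall from the proof of Proposition~\ref{prop2gen} the construction of the required derivation: choosing generators $e_1,e_2$ with $[e_1,e_2]=e_3$ and an element $x\in\mathfrak{n}^{p-2}$ with $[e_1,x]=y\neq0$ (or the symmetric choice using $e_2$), the linear map $\delta$ given by $\delta(e_2)=x$, $\delta(e_3)=y$ and $\delta(e_i)=0$ otherwise is a derivation of $\mathfrak{n}$. Its defining feature, $\delta(\mathfrak{n}^2)\subseteq\mathfrak{n}^{p-1}\subseteq Z(\mathfrak{n})$, is precisely the hypothesis of Theorem~\ref{mainthm4}, so that theorem applies and yields a local automorphism of $\mathfrak{n}$ which is not an automorphism.

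I expect the only genuine point of care to be the nilindex boundary. Theorem~\ref{mainthm4}, like Theorem~\ref{mainthm2}, is stated only for $p\geq4$, so the $2$-step case $p=3$ cannot be handled by the derivation construction alone and must be peeled off and sent to Theorem~\ref{mainthm3}; likewise the abelian case, for which the conclusion genuinely fails, must be excluded at the outset. The Leibniz verifications that $\delta$ is a derivation are identical to those already carried out for Proposition~\ref{prop2gen} and need not be repeated, so beyond the bookkeeping of these boundary cases the argument is a direct transcription of the local-derivation version with Theorem~\ref{mainthm2} replaced by Theorem~\ref{mainthm4}.
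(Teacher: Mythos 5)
Your proposal is correct and follows the same route as the paper: the paper's entire proof of this corollary is the single observation that the derivation $\delta$ constructed in the proof of Proposition~\ref{prop2gen} satisfies the hypothesis of Theorem~\ref{mainthm4}. Where you differ is in the boundary cases, and there your version is actually more careful than the paper's. Theorem~\ref{mainthm4} is stated only for nilindex $p\geq4$, yet a two-generated nilpotent Lie algebra can have nilindex $3$ (the $3$-dimensional Heisenberg algebra $\mathfrak{h}_1$ is two-generated), a case the paper's one-line proof silently skips and which you correctly route through Theorem~\ref{mainthm3}. Likewise, your exclusion of the abelian case (the $2$-dimensional abelian algebra is two-generated, and there every local automorphism is injective, hence an invertible linear map, hence an automorphism) makes explicit an implicit nondegeneracy hypothesis that the paper never states: its proof of Proposition~\ref{prop2gen} simply assumes $[e_1,e_2]=e_3\neq0$ without comment, and the conclusion genuinely fails for the abelian algebra. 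So this is the same argument as the paper's, with your bookkeeping closing real gaps in its statement and proof.
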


\begin{proof}
In the proof of Proposition \ref{prop2gen}, it is shown that there exists a derivation satisfying the condition of Theorem \ref{mainthm4}.
\end{proof}

As was in local derivations, the general situation of local automorphisms might be different in one generated algebras. There exists nilpotent algebras for which every local automorphism is automorphism. For instance, consider the following example:

\begin{example}
Let $\mathfrak{a}$ be a complex $n$-dimensional algebra with a basis $\mathcal{B}=\{e_1,e_2,\dots,e_n\}$ $(n\geq3)$ and multiplication table is defined as \eqref{1gen}. Then, direct computations show that every automorphism $\Phi$ of $\mathfrak{a}$ acts on basis elements as follows:
\begin{align*}
\Phi(e_1)&=\alpha e_1+\beta e_n,\\
\Phi(e_i)&=\alpha^{2^{i-1}} e_i,
\end{align*}
where $\alpha,\beta\in\mathbb{C}, \ 2\leq i\leq n.$\\
We shall show that ${\rm LocAut(\mathfrak{a})}={\rm Aut}(\mathfrak{a}).$ From the definition of local automorphism, we derive that any local automorphism $\nabla$ acts on basis elements as
\begin{align*}
\nabla(e_1)&=\delta e_1+\zeta e_n,\\
\nabla(e_i)&=\xi_i e_i,
\end{align*}
where $\delta,\zeta,\xi_i\in\mathbb{C}, \ 2\leq i\leq n.$

It is enough to show that $\xi_i=\delta^{2^{i-1}}.$ Take an element $x=e_1+e_{i}, \ (2\leq i\leq n-1).$ Then, there exists a automorphism
$\Phi_x$ such that $\Phi_x(x)=\nabla_x(x).$ Hence, we derive
\begin{align*}
\Phi_x(x)&=\alpha_x e_1+\alpha_x^{2^{i-1}} e_i+\beta_xe_n=\delta e_1+\xi_ie_i+\zeta e_n=\nabla(x).
\end{align*}
Comparing the corresponding coefficients we have $\xi_i=\delta^{2^{i-1}}.$ Thus, it only remains to show that $\xi_n=\delta^{2^{n-1}}.$ By taking an element $x=e_{n-1}+e_n$ and applying $\Phi_x(x)=\nabla_x(x)$ we have
\begin{align*}
\Phi_x(x)&=\alpha_x^{2^{n-2}} e_{n-1}+\alpha_x^{2^{n-1}}e_n=\delta^{2^{n-2}} e_{n-1}+\xi_ne_n=\nabla(x).
\end{align*}
Comparing respective coefficients, we obtain the required relation. So, $\nabla$ is an automorphism.

\end{example}

\end{document}